\theoremstyle{plain}
\newtheorem{theorem}{Theorem}[section]
\newtheorem{lemma}[theorem]{Lemma}
\newtheorem{proposition}[theorem]{Proposition}
\newtheorem{corollary}[theorem]{Corollary}
\theoremstyle{definition}
\newtheorem{definition}[theorem]{Definition}
\newtheorem{example}[theorem]{Example}
\theoremstyle{remark}
\numberwithin{equation}{section}
\definecolor{darkgreen}{rgb}{.1,.5,0}
\theoremstyle{plain}
\theoremstyle{definition}
\newtheorem*{Sketch of proof}{Sketch of proof}
\numberwithin{equation}{section} \setlength{\textwidth}{6.8in}
\renewcommand{\thefootnote}{\fnsymbol{footnote}}
\begin{document}
	
\title{Classes of operators related to subnormal operators}
	
\author[R.E. Curto and T. Prasad]{ Ra\'ul E. Curto and Thankarajan Prasad}
	
\date{}

\maketitle

%%%%%%%%%%%%%%%%%%%%%%%%%%%%%%%%%%%%%%%%%%%%%%%%%%%%%%%%%%%%%%%%%%%%%%%%%%%%%%%%%
%
%
%              Abstract
%
%
%%%%%%%%%%%%%%%%%%%%%%%%%%%%%%%%%%%%%%%%%%%%%%%%%%%%%%%%%%%%%%%%%%%%%%%%%%%%%%%%%

\centerline{\bf Abstract}

In this paper we attempt to lay the foundations for a theory encompassing some natural extensions of the class of subnormal operators, namely the $n$--subnormal operators and the sub-$n$--normal operators. \ We discuss inclusion relations among the above-mentioned classes and other related classes, e.g., $n$--quasinormal and quasi-$n$--normal operators. \ We show that sub-$n$--normality is stronger than $n$--subnormality, and produce a concrete example of a $3$--subnormal operator which is not sub-$2$--normal. \ In \cite{CU1}, R.E. Curto, S.H. Lee and J. Yoon proved that if an operator $T$ is subnormal, left-invertible, and such that $T^n$ is quasinormal for some $n \le 2$, then $T$ is quasinormal. \ In subsequent work, \cite{JS}, P.Pietrzycki and J. Stochel improved this result by removing the assumption of left invertibility. \ In this paper we consider suitable analogs of this result for the case of operators in the above-mentioned classes. \ In particular, we prove that the weight sequence of an $n$--quasinormal unilateral weighted shift must be periodic with period at most $n$. \ \vspace{-10pt}

%%%%%%%%%%%%%%%%%%%%%%%%%%%%%%%%%%%%%%%%%%%%%%%%%%%%%%%%%%%%%%%%%%%%%%%%%%%%%%%%%%%%%%%%%%%%%%

%%%%%%%%%%%%%%%%%%%%%%%%%%%%%%%%%%%%%%%%%%%%%%%%%%%%%%%%%%%%%%%%%%%%%%%%%%%%%%%

\setcounter{page}{1}
%%\tableofcontents

%%%%%%%%%%%%%%%%%%%%%%%%%%%%%%%%%%%%%%%%%%%%%%%%%%%%%%%%%%%%%%%%%%%%%%%%%%%%%%%%%%%%%%%%%%%%%%

\renewcommand{\thefootnote}{}
\footnote{
\\
\vskip -.5cm \ \noindent \hskip -.4cm \textit{Mathematics Subject
Classification
(2020).} Primary: 47B20, 47A10, 47B37, 47A20, 47A08; Secondary: 47A45, 47A50, 47B15, 47B47\\
\noindent
 \textit{Keywords.} normal operators, $n$--normal operators, $n$--subnormal operators, sub-$n$--normal operators}

\tableofcontents

%\hypersetup{colorlinks=true ,urlcolor=blue,urlbordercolor={0 1 1}}

%%%%%%%%%%%%%%%%%%%%%%%%%%%%%%%%%%%%%%%%%%%%%%%%%%%%%%%%%%%%%%%%%%%%%%%%%%%%%%%%%%%%%%%%%%
%
%
%                    Section 1
%
%
%%%%%%%%%%%%%%%%%%%%%%%%%%%%%%%%%%%%%%%%%%%%%%%%%%%%%%%%%%%%%%%%%%%%%%%%%%%%%%%%%%%%%%%%%%%

\section{Introduction}

The study of classes of non-normal operators on infinite dimensional complex Hilbert space is one of the chief interests in operator theory. \ The class of subnormal operators, introduced by P.R. Halmos \cite{Halmos2} and initially developed by Halmos and J. Bram \cite{Bram,Halmos1,Halmos3} is an interesting extension of the well-studied class of normal operators. \ A Hilbert space operator is subnormal if it has a normal extension. \ It can be observed that the theory of subnormal operators found in the literature is not easy, and oftentimes a highly nontrivial extension of normal operator  theory (\cite{Conway2,Halmos1,Halmos3}. \ A more general operator class, that of hyponormal operators includes both the normal and the subnormal operator classes. \ An extensive study on this class can been found in \cite{Putinar,xia}. \ Even though various extensions of hyponormal operators have been investigated by many authors, studies on subnormal operators and  hyponormal operators revolve very near other classes of operators, like binormal, quasinormal, $k$--hyponormal, etc. \ Spectral and structural problems related to these operators have received increased attention from operator theorists.

Halmos\cite{Halmos3} gave a characterization for subnormal operators in terms of the action of the operator on finite sets of vectors in its domain. \ J. Bram\cite{Bram} sharpened this result. \ The Bram-Halmos characterization for subnormal operators states that an operator $T\in B(\mathcal{H})$ is subnormal if and only if  $\Sigma^{k}_{i,j=0} \langle T^{j}x_{i}, T^{i}x_{j}\rangle  \geq 0$ for every finite set $x_{0}, x_{1}, \ldots ,x_{k}$ in $\mathcal{H}$. \ This is equivalent to
\begin{displaymath} \label{e1}
	\left( \begin{array}{ccccc}
		I  &  T^{*}  & \cdots     & T^{*k} \\
		T& T^{*}T  & \cdots &   T^{*}T  &  \\
		\vdots &  \vdots &\vdots & \vdots\\
		T^{k}& T^{*}T^{k}  &  \cdots  &  T^{*k}T^{k}&  \\
	\end{array} \right) \geq 0\eqno(1)
\end{displaymath}
for all $k\geq 1$ \cite{bridge}. \ If $k=1$, then it is evident that $T$ is hyponormal. \ If $T$ satisfies condition (\ref{e1}) for a fixed $k$, then $T$ is called $k$--hyponormal. \ To provide a bridge between subnormality and hyponormality and subnormality, a pioneering study of $k$--hyponormal operators was done by R.E. Curto in \cite{curto4,curto1,curto2,curto3}.

Let $\mathcal{H}$ and $\mathcal{K}$ be separable complex Hilbert spaces, and let $B(\mathcal{H},\mathcal{K})$ denote the algebra of all  bounded linear operators from $\mathcal{H}$ to $\mathcal{K}$ (We also write $B(\mathcal{H})=B(\mathcal{H},\mathcal{H})$).
\ Recall that an operator $T\in B(\mathcal{H})$ is said to be \textit{$n$--normal} if  $T^{*} T^{n} = T^{n}T^{*}$\cite{patel}. \ Alternatively, an operator $T$ is $n$--normal if and only if $T^{n}$ is normal. \  The class of $n$--normal operators has been studied extensively in \cite{patel,cho2,cho1,duggal,Put}; recently, B. Duggal\cite{duggal} proved that $n$--normal operators are subscalar and satisfy Weyl's Theorem. \

In parallel with the above-mentioned developments, the theory of subnormal operators and related classes of operators have had a remarkable impact in a number of problems in  operator theory and mathematical physics; see, for instance, \cite{HS,If,Sz}. \ In this paper, we focus attention on two larger classes: the $n$--subnormal operators and the sub-$n$--normal operators. \ We begin with some notation and preliminaries. \ First, we briefly recall two classical families of bounded linear operators on Hilbert space. \ As usual, we say that an operator $T$ is {\it normal} if $T^*T=TT^*$, {\it hyponormal} if $T^*T \ge TT^*$, {\it quasinormal} if $T$ commutes with $T^*T$, {\it subnormal} if $T$ is the restriction of a normal operator to an invariant subspace, and {\it quadratically hyponormal} if $p(T)$ is hyponormal for every quadratic polynomial $p$. 

We briefly pause to alert the reader that a very different notion of $n$--normality exists in the literature. \ Motivated by the pioneering work of C. Pearcy and N. Salinas on $n$--normality of operators (which they defined as the operators unitarily equivalent to an $n \times n$ operator matrix whose entries are commuting normal operators, cf. \cite{PS,Sal}, and also \cite{Pau}), in 2006 E. Ko, I.B. Jung and C. Pearcy \cite{JKP} introduced and studied the so-called sub-$n$--normal operators, defined as the restriction of an $n$--normal operator (as defined in \cite{PS,Sal}) to an invariant subspace; however, this notion has not been further developed in the literature. \ On the other hand, the notion of $n$--normality introduced and studied by S.A. Alzuraiqi and A.B. Patel in 2010 has recently taken center stage, and it is nowadays widely considered the appropriate version of $n$--normality in Hilbert space. \ In this paper we focus on this notion and the associated sub-$n$--normality, while at the same time introducing the new notion of $n$--subnormality.

Consider now the Hilbert space $\ell^2$ with its standard orthonormal basis $\{e_j\}_{j=0}^\infty$ (note that we begin indexing at zero). \ Given a bounded sequence of positive real numbers $\alpha \equiv \{\alpha_j\}_{j \ge 0}$, we define the {\it unilateral weighted shift} $W_\alpha$ acting on $\ell^2$ by $W_\alpha e_j := \alpha_j e_{j+1}$, and extend it to all of $\ell^2$ by linearity. \  It is well-known that $W_{\alpha}$ is never normal, quasinormal if and only if it is a scalar multiple of the (un-weighted) unilateral shift $U_+$, and hyponormal if and only if the sequence $\alpha$ is non-decreasing. \ 

On the other hand, recall that the Hardy space of the unit circle $\mathbb{T}$ is the closed subspace $H^2 \equiv H^2(\mathbb{T})$ of $L^2 \equiv L^2(\mathbb{T},\frac{d\theta}{2 \pi})$ spanned by the polynomials $\mathbb{C}[z]$. \ The above-mentioned unilateral shift $U_+$ is (canonically) unitarily equivalent to the multiplication operator $M_z \in B(L^2(\mathbb{T},\frac{d \theta}{2 \pi}))$ restricted to $H^2(\mathbb{T})$. \ 

As is customary, we let $M_n \equiv M_{n \times n}$ denote the algebra of $n \times n$ matrices over $\mathbb{C}$. \ We denote by $L^2 _{\mathbb{C}^n}$ (resp. $L^{\infty} _{M_n}$)  the Hilbert space of all $\mathbb{C}^n$--valued Lebesgue square integrable functions on the unit circle (resp. the Banach space of all $M_n$--valued essentially bounded functions on the unit circle). \ For a given $\Phi$ $\in$ $L^{\infty}_{M_n}$, the block Toeplitz operator with symbol $\Phi$ is defined as $T_\Phi f := P_n (\Phi f)$ \quad ($f$ $\in$ $H^2_{\mathbb{C}^n}$) which is the corresponding Hardy space. \ (Here $P_n$ is the orthogonal projection of $L^2_{\mathbb{C}^n}$ onto $H^2_ {\mathbb{C}^n}$.) \ If we take $H^2_{\mathbb{C}^n} = H^2 (\mathbb{T}) \oplus \cdots \oplus H^2 (\mathbb{T})$, then it is easy to see that 

$$T_{\phi} = \begin{bmatrix}
	T_{\phi_{11}} & \cdots & T_{\phi_{1n}} \\
	& \vdots &  \\
	T_{\phi_{n1}} & \cdots & T_{\phi_{nn}}
\end{bmatrix},  \text{whenever}\,\, 
\Phi = \begin{bmatrix}
	\phi_{11} & \cdots & \phi_{1n} \\
	& \vdots &  \\
	\phi_{n1} & \cdots & \phi_{nn}
\end{bmatrix} .\\$$

(In an entirely similar way, we can define vectorial Toeplitz operators where the space of matrices $M_n$ is replaced by the algebra of bounded operators acting on a Hilbert space.)

\medskip
\section{Some Preliminary Results}

For easy reference, we first list a number of well-known results. \ Recall that an operator $T$ is said to be an {\it isometry} if $T^*T = I$, where $I$ denotes the identity operator.

\begin{lemma}
(i) \ For $n \ge 1$, an isometry is subnormal but not necessarily $n$--normal. \ 

(ii) \ On the vector-valued Hardy space on the unit circle, denoted by $H^2(\mathbb{T}) \otimes \mathbb{C}^n$, let $\Phi=\begin{bmatrix} 0& 1\\
      0& 0\\
   \end{bmatrix} \in L^{\infty}_{M_{2}}(\mathbb{T})$, regarded as the symbol of the vector-valued Toeplitz operator $T_\Phi$. \ Then $T_{\Phi}$  is not subnormal; actually, since $\Phi$ is not normal,  $T_{\Phi}$ cannot even be hyponormal, by a result of C.Gu, J. Hendricks and D. Rutherford \cite[Theorem 3.3]{GHR}. \ However, $T_{\Phi}$ is $2$-normal.

(iii) \ Combining (i) and (ii) above, we easily see that there is no inclusion relation between the class of subnormal operators and the class of $2$--normal operators.
\end{lemma}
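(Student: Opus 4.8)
The plan is to prove each of the three parts separately, since they are largely independent verifications.

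For part (i), I would first recall that any isometry is subnormal; this is classical (an isometry extends to a unitary via the Wold decomposition, or more concretely $U_+$ is subnormal because it is the restriction of the bilateral shift $M_z$ on $L^2(\mathbb{T})$ to the invariant subspace $H^2(\mathbb{T})$). To show an isometry need not be $n$-normal, the cleanest route is to exhibit the unilateral shift $U_+$ as a counterexample. I would compute directly that $U_+^{*}U_+^{n} \ne U_+^{n}U_+^{*}$ for every $n \ge 1$: since $U_+$ is an isometry, $U_+^{*}U_+^{n} = U_+^{n-1}$, whereas $U_+^{n}U_+^{*} = U_+^{n}U_+^{*}$ kills $e_0$ and acts as $U_+^{n-1}$ elsewhere. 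Testing on the basis vector $e_0$ gives $U_+^{*}U_+^{n}e_0 = e_{n-1}$ but $U_+^{n}U_+^{*}e_0 = 0$, so the two operators disagree and $U_+$ is not $n$-normal for any $n$.

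For part (ii), I would simply verify the stated properties of $T_\Phi$ with the given nilpotent symbol $\Phi$. The non-hyponormality (hence non-subnormality) is cited directly from the Gu--Hendricks--Rutherford criterion \cite[Theorem 3.3]{GHR}: a hyponormal block Toeplitz operator must have a normal symbol, and $\Phi$ is visibly not normal since $\Phi^{*}\Phi \ne \Phi\Phi^{*}$. To check that $T_\Phi$ is $2$-normal, I would use the block structure $T_\Phi = \left(\begin{smallmatrix} 0 & T_1 \\ 0 & 0 \end{smallmatrix}\right)$ where $T_1$ is the identity Toeplitz operator (symbol $1$), and compute $T_\Phi^{2} = 0$. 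Since $T_\Phi^{2}$ is the zero operator, it is trivially normal, which is exactly the statement that $T_\Phi$ is $2$-normal.

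For part (iii), the conclusion is purely logical: part (i) produces a subnormal operator (any isometry, e.g. $U_+$) that is not $n$-normal, in particular not $2$-normal, so subnormality does not imply $2$-normality; part (ii) produces a $2$-normal operator that is not even hyponormal, hence not subnormal, so $2$-normality does not imply subnormality. Together these show neither class is contained in the other.

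I do not expect any genuine obstacle here, as all three parts reduce to elementary verifications together with one cited theorem. The only point requiring mild care is confirming that the symbol $\Phi$ in part (ii) indeed yields $T_\Phi^{2}=0$ rather than merely a nilpotent-looking formal product; this follows because the off-diagonal entry is the analytic symbol $1$, whose Toeplitz operator is a genuine (identity) operator, so the block matrix squares to zero honestly. Confirming the non-normality of $\Phi$ to invoke \cite[Theorem 3.3]{GHR} is immediate.
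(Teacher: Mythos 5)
Your proposal is correct and follows essentially the same route the paper intends: the paper states this lemma as a known fact with its justification embedded in the statement (the unilateral shift as the non-$n$-normal isometry, the citation of \cite[Theorem 3.3]{GHR} for non-hyponormality, and the computation $T_\Phi^2=0$ for $2$-normality), and your write-up simply fills in those same verifications correctly. No gaps.
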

 
\begin{lemma} (\cite[Example 2.3]{patel})
On the Hilbert space $\ell^2(\mathbb{Z}_+)$ of square summable sequences of complex numbers, with canonical orthonormal basis $\{e_n\}_{n ge 0}$, consider the operator $T$ which maps $e_0$ to itself, $e_{2k-1}$ to $e_{2k}$ \; (for $k \ge 1$), and $e_{2k}$ to $0$ \; (for $k \ge 1$). \ Then $T^2$ is the orthogonal projection onto the one-dimensional subspace spanned by $e_0$ (and therefore normal and compact), while $T$ is not compact, and neither hyponormal nor co-hyponormal. 
\end{lemma}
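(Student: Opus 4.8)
The plan is to verify each of the three assertions by a direct computation on the orthonormal basis, since $T$ is prescribed explicitly on basis vectors. First I would evaluate $T^2$ on each $e_n$. From $Te_0 = e_0$ we get $T^2 e_0 = e_0$; from $Te_{2k-1} = e_{2k}$ and $Te_{2k} = 0$ (for $k \ge 1$) we get $T^2 e_{2k-1} = T e_{2k} = 0$ and $T^2 e_{2k} = 0$. Hence $T^2 e_0 = e_0$ and $T^2 e_n = 0$ for every $n \ge 1$, which is precisely the statement that $T^2$ is the orthogonal projection onto $\operatorname{span}\{e_0\}$. Being a rank-one orthogonal projection, $T^2$ is self-adjoint (hence normal) and of finite rank (hence compact).

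Next, to see that $T$ itself is not compact, I would use the fact that a compact operator carries weakly convergent sequences to norm-convergent ones. The orthonormal sequence $\{e_{2k-1}\}_{k \ge 1}$ converges weakly to $0$, yet $T e_{2k-1} = e_{2k}$ has norm $1$ for every $k$, so $\{T e_{2k-1}\}$ does not converge in norm to $0$. Equivalently, $T$ maps an infinite orthonormal set onto an infinite orthonormal set, so its range contains a set that is not precompact. Either way, $T$ fails to be compact.

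Finally, for the two hyponormality assertions I would first identify the adjoint. Reading off $\langle T^* e_i, e_j\rangle = \langle e_i, T e_j\rangle$ gives $T^* e_0 = e_0$, $T^* e_{2k} = e_{2k-1}$, and $T^* e_{2k-1} = 0$ (for $k \ge 1$). Then $T^*T$ is the orthogonal projection onto $\operatorname{span}\{e_0, e_1, e_3, e_5, \dots\}$, while $TT^*$ is the orthogonal projection onto $\operatorname{span}\{e_0, e_2, e_4, \dots\}$. Consequently the self-commutator $T^*T - TT^*$ is diagonal in the given basis, with diagonal entries $0$ on $e_0$, $+1$ on each odd-indexed $e_{2k-1}$, and $-1$ on each positive even-indexed $e_{2k}$. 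Since $\langle (T^*T - TT^*) e_1, e_1\rangle = +1 > 0$ and $\langle (T^*T - TT^*) e_2, e_2\rangle = -1 < 0$, the self-commutator is indefinite: the first inequality rules out co-hyponormality ($TT^* \ge T^*T$ fails on $e_1$) and the second rules out hyponormality ($T^*T \ge TT^*$ fails on $e_2$).

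The argument is entirely computational, so there is no substantial obstacle; the only point requiring care is the correct bookkeeping of $T^*$ under the even/odd splitting and the separate treatment of the fixed vector $e_0$, which is exactly why the self-commutator vanishes on that coordinate rather than taking the value $\pm 1$.
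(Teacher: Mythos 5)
Your proof is correct, and it is exactly the standard verification: the paper itself offers no argument for this lemma, merely citing \cite[Example 2.3]{patel}, and your direct computation on the basis vectors (identifying $T^*$, showing $T^*T$ and $TT^*$ are the projections onto the odd-indexed-plus-$e_0$ and even-indexed-plus-$e_0$ spans, and reading off the indefinite self-commutator) is the intended route. The weak-to-norm compactness criterion applied to $\{e_{2k-1}\}$ and the careful handling of the fixed vector $e_0$ are both handled correctly, so nothing is missing.
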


\begin{lemma} \ (i) (\cite[Example 2.4]{patel}) \ On $\ell^2(\mathbb{Z}_+)$, the (unweighted) unilateral shift $U_+$ is subnormal, but not $n$--normal for any $n \ge 1$. \newline
(ii) (\cite[Theorem 2.5]{patel}) \ The set of $n$--normal operators is closed in the norm topology, and closed under scalar multiplication. \newline
(iii) (\cite[Proposition 2.6]{patel}) \ If $T$ is $n$--normal, so is $T^*$. \newline 
(iv) (\cite[Proposition 2.6]{patel}) \ If $T$ is $n$--normal and invertible, so is $T^{-1}$. \newline 
(v) (\cite[Proposition 2.6]{patel}) \ If $T$ is $n$--normal and $S$ is unitarily equivalent to $T$, then $S$ is $n$--normal. \newline 
(vi) (\cite[Proposition 2.6]{patel}) \ If $T$ is $n$--normal and $\mathcal{M}$ is a reducing subspace for $T$, then $T|_{\mathcal{M}}$ is $n$--normal. \newline 
(vii) (\cite[Theorem 2.8]{patel}) \ If $S$ and $T$ are $n$--normal, and $ST=TS$, then $ST$ is $n$--normal. \ This result is not true if $S$ and $T$ do not commute. \newline 
(viii) (\cite[Corollary 2.10]{patel}) \ If $T$ is $n$--normal and $m \ge 1$, then $T^m$ is $n$--normal. \newline 
(ix) (\cite[Lemma 2.13]{patel}) \ If $S$ and $T$ are $2$--normal, and $ST+TS=0$, then $S+T$ and $ST$ are $2$--normal. \newline 
(x) (\cite[Proposition 2.19]{patel}) \ $T-\lambda$ is $n$--normal for all $\lambda \in \mathcal{C}$, then $T$ is normal. \ On the other hand, $I+T$ may fail to be $2$--normal.\newline 
(xi) (\cite[Proposition 2.20]{patel}) \ If $T \equiv A+iB$ ($A,B$ self-adjoint), then $T$ is $2$--normal if and only if $B^2$ commutes with $A$ and $A^2$ commutes with $B$. \newline 
(xii) (\cite[Examples 2.21 and 2.22]{patel}) \ A $2$--normal operator may fail to be $3$--normal; similarly, a $3$--normal operator may fail to be $2$--normal. \newline
(xiii (\cite[Proposition 2.23]{patel}) \ If $T$ is both $n$--normal and $(n+1)$--normal, then $T$ is $(n+2)$--normal. \newline 
(xiv) (\cite[Corollary 2.27]{patel}) \ If $T$ is both a partial isometry and a $2$--normal operator, then $T$ is $n$--normal for all $n \ge 3$. \newline 
(xv) (\cite[Proposition 2.33]{patel}) \ Let $T$ be an operator, let $n \ge 1$ and let $F:=T^n+T^*$ and $G:=T^n-T^*$. \ Then $T$ is $n$--normal if and only if $G$ commutes with $F$. \newline 
(xvi) (\cite[Proposition 2.39]{patel}) \ If $T$ is $n$--normal and quasinilpotent, then $T$ is nilpotent.
\end{lemma}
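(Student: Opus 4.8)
Since every item is recorded from \cite{patel}, the plan is to organize the arguments around the single principle that $T$ is $n$--normal exactly when $T^n$ is normal, and to reconstruct each part from standard normal--operator theory. First I would pin down the equivalence of the two defining conditions $T^*T^n=T^nT^*$ and $(T^n)^*T^n=T^n(T^n)^*$: one direction is trivial, and for the other I would observe that $T^n$ always commutes with $T$, so if $T^n$ is normal then Fuglede--Putnam forces $T^n$ to commute with $T^*$. Armed with this, parts (ii)--(vi) and (viii) become transcriptions of normal--operator facts through the identities $(\lambda T)^n=\lambda^nT^n$, $(T^*)^n=(T^n)^*$, $(U^*TU)^n=U^*T^nU$, $(T|_{\mathcal M})^n=T^n|_{\mathcal M}$ for reducing $\mathcal M$, and $(T^m)^n=(T^n)^m$: norm--closedness and scalar invariance follow from continuity and homogeneity of $T\mapsto T^*T^n-T^nT^*$; invariance under adjoint, inverse, unitary equivalence and reduction follows from the corresponding stability of normality; and $T^m$ is $n$--normal since $(T^n)^m$ is a power of a normal operator. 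For (i), $U_+$ is an isometry, hence subnormal, whereas $U_+^n$ is a non--unitary isometry and so not normal.

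The parts involving sums and products I would handle with Fuglede--Putnam in the form that commuting normal operators have normal sum and product. In (vii), $ST=TS$ gives $(ST)^n=S^nT^n$ with $S^n,T^n$ commuting normals. In (ix), $ST+TS=0$ yields $S^2T=TS^2$ by a one--line manipulation, whence $S^2$ and $T^2$ commute, and then $(S+T)^2=S^2+T^2$ and $(ST)^2=-S^2T^2$ are a sum and product of commuting normals. Part (xv) is the expansion $[F,G]=[T^n+T^*,T^n-T^*]=2(T^*T^n-T^nT^*)$, which vanishes exactly when $T$ is $n$--normal. Part (xvi) follows from the spectral mapping theorem, which gives $\sigma(T^n)=\{0\}$, together with the fact that a normal operator with spectrum $\{0\}$ is $0$, so $T^n=0$.

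For (x) I would regard $\lambda\mapsto(T-\lambda)^{*n}(T-\lambda)^n-(T-\lambda)^n(T-\lambda)^{*n}$ as an operator--valued polynomial in $\lambda,\bar\lambda$ that vanishes identically, so each coefficient vanishes; the coefficient of $\lambda^{n-1}\bar\lambda^{n-1}$ is $n^2(T^*T-TT^*)$, forcing $T$ normal. Part (xi) is a direct computation: with $T^2=(A^2-B^2)+i(AB+BA)$ a sum of self--adjoint parts, $T^2$ is normal iff these parts commute, and expanding $[A^2-B^2,\,AB+BA]$ collapses to $[A,B^2]=0$ and $[B,A^2]=0$. For (xii) I would simply verify the explicit counterexamples.

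The heart of the matter, and the step I expect to be the main obstacle, is (xiii). Writing $N:=T^n$, which is normal and commutes with $T$, Fuglede--Putnam shows that $T$ and $T^*$ commute with $N$ and $N^*$, hence with $|N|^2:=N^*N=NN^*$. Reducing the normality of $T^{n+1}=TN$ then yields $|N|^2(T^*T-TT^*)=0$, while the sought normality of $T^{n+2}=T^2N$ is equivalent to $|N|^2(T^{*2}T^2-T^2T^{*2})=0$. Setting $C:=T^*T-TT^*$, I would expand $T^{*2}T^2-T^2T^{*2}$ into terms $(TT^*)C$, $C(TT^*)$, $C^2$, $T^*CT$ and $TCT^*$; multiplying on the left by $|N|^2$ and commuting it past $T$ and $T^*$ turns every term into a multiple of $|N|^2C=0$, which gives (xiii). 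Finally, for (xiv) I would first show by a short computation from $TT^*T=T$ and the normality of $T^2$ that a $2$--normal partial isometry is $3$--normal, and then obtain $n$--normality for all $n\ge 3$ by induction, feeding consecutive pairs into (xiii).
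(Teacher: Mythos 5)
The paper does not actually prove this lemma: it is a compendium of results quoted from \cite{patel} ``for easy reference,'' so there is no internal argument to compare yours against, and your reconstruction must stand on its own. Most of it does. The Fuglede--Putnam equivalence of $T^*T^n=T^nT^*$ with normality of $T^n$ is the right backbone; items (i)--(ix), (xv), (xvi) and the first half of (x) are correct as sketched; and your argument for (xiii) --- the only genuinely delicate item --- checks out completely: with $N:=T^n$ and $C:=T^*T-TT^*$, the operator $|N|^2:=N^*N$ commutes with $T$ and $T^*$, normality of $TN$ gives $|N|^2C=0$, and the expansion $T^{*2}T^2-T^2T^{*2}=(TT^*)C+C(TT^*)+C^2+T^*CT+TCT^*$ is indeed annihilated term by term upon left multiplication by $|N|^2$. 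Your base step for (xiv) also works, since $2$--normality together with $TT^*T=T$ gives $T^*T^3=(T^2T^*)T=T(TT^*T)=T^2$ and $T^3T^*=T(T^*T^2)=(TT^*T)T=T^2$, after which (xiii) propagates by induction.

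Two genuine gaps remain. First, in (xi) the step ``expanding $[A^2-B^2,AB+BA]$ collapses to $[A,B^2]=0$ and $[B,A^2]=0$'' fails as described: the expansion gives $A[A^2,B]+[A^2,B]A-[B^2,A]B-B[B^2,A]$, and the vanishing of this sum does not visibly force the two commutators to vanish separately (only the converse implication follows by mere expansion). The efficient repair uses the equivalence you already set up: $T$ is $2$--normal iff $T^*T^2=T^2T^*$, and a direct computation with $T=A+iB$ gives $T^*T^2-T^2T^*=2[B^2,A]+2i[A^2,B]$; since $[B^2,A]$ and $[A^2,B]$ are skew-adjoint, comparing this identity with its adjoint forces $[B^2,A]=[A^2,B]=0$. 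Second, the statement contains several counterexample claims --- the non-commuting case in (vii), ``$I+T$ may fail to be $2$--normal'' in (x), and both halves of (xii) --- which you promise to ``verify'' but never exhibit; a blind proof must produce them. They are cheap: $T=\left(\begin{smallmatrix}0&1\\0&0\end{smallmatrix}\right)$ is $2$--normal while $(I+T)^2=I+2T$ is not normal; the matrix $M$ of Example \ref{ex26} with $a^2+bc\neq 0$ is $2$--normal while $M^3=(a^2+bc)M$ is not normal; the cyclic weighted shift $e_1\mapsto e_2\mapsto e_3\mapsto 2e_1$ on $\mathbb{C}^3$ has normal cube $2I$ but non-normal square; and $S=\left(\begin{smallmatrix}1&0\\0&0\end{smallmatrix}\right)$, $R=\left(\begin{smallmatrix}1&1\\1&1\end{smallmatrix}\right)$ are normal and non-commuting with $SR$ a non-normal idempotent, so $(SR)^n=SR$ is never normal.
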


\section{The classes of $n$--subnormal operators and sub-$n$–normal operators}

We now define the classes of $n$--subnormal operators and sub-$n$--normal operators as natural extensions of the classes of normal and subnormal operators. \ Hereafter, $n$ will denote a fixed positive integer.

\begin{definition}
An operator $T\in B(\mathcal{H})$ is said to be $n$--subnormal if $T^{n}$ is subnormal.
\end{definition}

Our interest in the class of $n$--subnormal operators is partly motivated by a long-standing open question in operator theory, recorded as Problem 5.6 in \cite{ConwayFeldman}: Characterize the subnormal operators having a square root. \ This question has been recently considered by J. Mashreghi, M. Ptak and W. Ross in \cite{MPR}.

\begin{definition}
An operator $T\in B(\mathcal{H})$ is said to be sub-$n$--normal if it is the restriction of an $n$--normal operator to an invariant subspace; that is, there exists a Hilbert space $\mathcal{K}$ containing $\mathcal{H}$ and an $n$--normal operator $S$ on $\mathcal{K}$ such that $S\mathcal{H} \subseteq \mathcal{H}$ and $T = S |_{\mathcal{H}}$.

\end{definition}
It is easy to see that sub-$n$--normal operators generally admit many non-unitarily equivalent $n$--normal extensions. \ In the sequel (see Theorem \ref{thm 229}), we will identify a unique (up to unitary equivalence) {\it minimal} $n$--normal extension, just as it happens with subnormality.

It is well known that every subnormal operator is hyponormal. \ Now we will see that an $n$--subnormal operator need not be hyponormal when $n > 1$. 

   \begin{example}
 Consider the matrix $A = 
\begin{bmatrix}
0 & 1\\
0 & 0\\
\end{bmatrix}$. \ It is easy to see that $A^2 = 0$ and therefore, $A$ is $2$--subnormal. \ Moreover, $A$ is not hyponormal, as a straightforward calculation reveals.

%It is clear that $$[A^\ast, A] = 
%\begin{bmatrix}
%0 & 0\\
%1 & 0\\
%\end{bmatrix}
%\begin{bmatrix}
%0 & 1\\
%0 & 0\\
%\end{bmatrix} -
%\begin{bmatrix}
%0 & 1\\
%0 & 0\\
%\end{bmatrix}
%\begin{bmatrix}
%0 & 0\\
%1 & 0\\
%\end{bmatrix} =
%\begin{bmatrix}
%0 & 0\\
%0 & 1\\
%\end{bmatrix} -
%\begin{bmatrix}
%1 & 0\\
%0 & 0\\
%\end{bmatrix} =
%\begin{bmatrix}
%-1 & 0\\
%0 & 1\\
%\end{bmatrix} $$
%is not positive. \ That is, $A$ is not hyponormal.
\end{example}

Next, we will show that a hyponormal operator need not be $2$--subnormal. \ 

\begin{example} 
 Let $T = 2U + U^{\ast}$. \ It is well known that $T$ is hyponormal but $T^2$ is not hyponormal. \ It follows that $T$ is not $2$--subnormal. 
\end{example}

\begin{example} \label{ex26}
Let $M= 
\begin{bmatrix}
a & b\\
c & -a\\
\end{bmatrix} \in M_2(\mathbb{C})$, where $|b| \neq |c|$. 
Clearly, $M$ is not normal. \ But, $M^2 = 
\begin{bmatrix}
a & b\\
c & -a\\
\end{bmatrix}
\begin{bmatrix}
a & b\\
c & -a\\
\end{bmatrix} = 
\begin{bmatrix}
a^2+bc & 0\\
0 & a^2+bc\\
\end{bmatrix}$
is normal. \ This shows that  $M$ is $2$--normal (and a fortiori $T$ is sub-2-normal), but $T$ is not even hyponormal, much less subnormal.
\end{example} 

\begin{example} \label{ex27}
For $P$ a nontrivial projection, the $2 \times 2$ operator matrix  
$\begin{bmatrix} % or pmatrix or bmatrix or Bmatrix or ...
    I& P\\
    0& -I\\
   \end{bmatrix}$  
	is $2$-normal but not hyponormal.
   \end{example}
	
Related to Examples \ref{ex26} and \ref{ex27}, we briefly pause to state a fundamental result about $2$--normality, proved by H. Radjavi and P. Rosenthal in 1971. \ First, we recall that, given a bounded operator $T$, we let $\sigma(T)$, $\sigma_{a}(T)$, and $\partial \sigma(T)$ denote the spectrum, the approximate point spectrum, and the boundary of the spectrum, respectively.
	
\begin{theorem}	(\cite[Theorem 1]{RadRos}) \ An operator is the square root of a normal operator if and only if it is of the form
$$
A \oplus\left(\begin{array}{rr}
B & C \\
0 & -B
\end{array}\right),
$$
where $A$ and $B$ are normal, and $C$ is a positive one-to-one operator commuting with $B$. \ Furthermore, $B$ can be chosen so that $\sigma(B)$ lies in the closed upper half-plane and the Hermitian part of $B$ is positive. \ (Of course either direct summand may be absent in the above expression.)
\end{theorem}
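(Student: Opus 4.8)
My plan is to treat the two implications separately. The \emph{if} direction is the easy one: given $T = A \oplus \left(\begin{smallmatrix} B & C \\ 0 & -B\end{smallmatrix}\right)$ with $A,B$ normal and $C$ a positive operator commuting with $B$, I would simply compute
$$T^2 = A^2 \oplus \begin{pmatrix} B^2 & BC-CB \\ 0 & B^2 \end{pmatrix} = A^2 \oplus \begin{pmatrix} B^2 & 0 \\ 0 & B^2 \end{pmatrix},$$
using $BC = CB$, and observe that $A^2$ and $B^2 \oplus B^2$ are normal because $A$ and $B$ are. Hence $T^2$ is normal, and the extra normalization of $B$ plays no role in this direction.

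For the \emph{only if} direction, suppose $N := T^2$ is normal. First I would record that $T$ commutes with $N$, so by Fuglede's theorem $T$ also commutes with $N^*$; consequently $T$ commutes with every spectral projection $E(\cdot)$ of $N$ and with every Borel function of $N$. In particular $\ker N = \ker N^*$ reduces $T$, and on it $T$ is a square root of $0$; writing such a nilpotent in the canonical form $\left(\begin{smallmatrix} 0 & C_0 \\ 0 & 0\end{smallmatrix}\right)$ with $C_0 \ge 0$ one-to-one (via polar decomposition, after deleting $\ker C_0$, which is absorbed into the normal summand) already produces a block of the required type with $B = 0$. Thus I may assume $N$ is one-to-one with dense range. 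On this part I would introduce the normal operator $\nu := \sqrt{N}$ defined by the Borel functional calculus, choosing the branch of the square root taking values in the closed upper half-plane; then $\nu^2 = N$, $\sigma(\nu)$ lies in the closed upper half-plane, and $\nu$ commutes with $T$.

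Formally $V := \nu^{-1}T$ is then an involution, $V^2 = I$, with $T = \nu V$ and $\nu V = V\nu = T$, so the whole problem reduces to the structure of bounded involutions. An involution $V$ is governed by its eigenspaces $\mathcal{M} = \ker(V-I)$ and $\mathcal{N} = \ker(V+I)$, which form a (generally non-orthogonal) algebraic direct sum of $\mathcal{H}$. Applying the two-subspace (generic position) decomposition of Halmos, $\mathcal{H}$ splits into a part on which $\mathcal{M} \perp \mathcal{N}$, where $V = I \oplus (-I)$ is a self-adjoint symmetry and hence $T = \nu V$ is \emph{normal} (this is the summand $A$), and a generic part on which $V$ is unitarily equivalent to $\left(\begin{smallmatrix} I & 2\cot\Theta \\ 0 & -I\end{smallmatrix}\right)$ for an angle operator $\Theta$ with $0 < \Theta < \pi/2$. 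Multiplying back by $\nu$ and absorbing constants, the generic part yields exactly $\left(\begin{smallmatrix} B & C \\ 0 & -B\end{smallmatrix}\right)$, with $B$ the normal compression of $\nu$, with $C$ built from $\cot\Theta$ hence positive and one-to-one, and with $BC = CB$ inherited from the commutation of $\nu$ and $V$.

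I expect the main obstacle to be analytic rather than algebraic: when $0 \in \sigma(N)$ and the spectrum accumulates at $0$, the operator $\nu = \sqrt{N}$ has no bounded inverse, so $V = \nu^{-1}T$ does not literally exist as a bounded operator. I would circumvent this by truncating with the spectral projections $E(\{\,|z| \ge 1/k\,\})$, carrying out the involution analysis on each bounded piece where $\nu$ is boundedly invertible, and then reassembling by a direct-sum (or direct-integral) limit, checking that the normal summands combine into a single normal $A$ and the generic blocks into a single $\left(\begin{smallmatrix} B & C \\ 0 & -B\end{smallmatrix}\right)$ with $C$ positive and one-to-one. The remaining bookkeeping --- that $C$ commutes with $B$, that $C$ is one-to-one (which is precisely what excludes the orthogonal case already placed in $A$), and that the branch choice makes $\sigma(B)$ lie in the closed upper half-plane with positive Hermitian part --- is then routine.
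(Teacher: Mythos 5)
A preliminary remark: the paper offers no proof of this theorem at all --- it is quoted, with citation, from Radjavi and Rosenthal \cite{RadRos}, and then used as a black box in the proposition immediately following it. So your proposal can only be measured against the original Radjavi--Rosenthal argument, and in outline you have reconstructed essentially that argument: Fuglede's theorem to make $T$ commute with the spectral resolution of $N:=T^2$; splitting off $\ker N$, where $T$ is nilpotent of order two and the polar decomposition produces a block with $B=0$; the Borel branch $\nu=\sqrt{N}$ on the injective part; the involution $V=\nu^{-1}T$ together with the two-subspace analysis; and spectral truncation when $\nu$ is not boundedly invertible. Your ``if'' direction is complete and correct, the kernel step is correct, and the truncation plan does work as predicted: the annular spectral subspaces $E(\{1/(k+1)\le |z|<1/k\})\mathcal{H}$ of $N$ reduce $T$, the involution argument applies on each, and the blocks recombine by a direct-sum shuffle into a single $A\oplus\bigl(\begin{smallmatrix} B & C\\ 0 & -B\end{smallmatrix}\bigr)$ with all required properties preserved.

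There is, however, one step that fails as written: the positivity of $C$. On the generic part, in the Halmos coordinates $\mathcal{K}\oplus\mathcal{K}$, you correctly get $V=\bigl(\begin{smallmatrix} I & -2\cot\Theta\\ 0 & -I\end{smallmatrix}\bigr)$, and --- because $\nu$ and $\nu^{*}$ both commute with $V$, so that the orthogonal projections onto $\mathcal{M}$ and $\mathcal{N}$ commute with $\nu$ --- also $\nu=\bigl(\begin{smallmatrix} B & 0\\ 0 & B\end{smallmatrix}\bigr)$ with $B$ normal and commuting with $\Theta$. But then
$$
T=\nu V=\begin{pmatrix} B & -2B\cot\Theta\\ 0 & -B\end{pmatrix},
$$
and the corner entry $-2B\cot\Theta$ is a normal operator times a positive one, not a positive operator; ``built from $\cot\Theta$, hence positive'' is false, and no absorption of scalar constants can fix it. What repairs it is a further unitary conjugation: on this part $B$ is injective and normal, so its polar decomposition $B=U|B|$ has a unitary phase $U$ lying in the von Neumann algebra generated by $B$, and therefore $U$ commutes with $|B|$ and with $\cot\Theta$ (which commutes with both $B$ and $B^{*}$). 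Conjugating by $\operatorname{diag}(I,-U^{*})$ leaves the diagonal pair $(B,-B)$ unchanged and replaces the corner by $2|B|\cot\Theta\ge 0$, which is also one-to-one; only after this step do you have the stated form. A final caution on the normalization clause: with your branch (values of argument in $[0,\pi)$) you do get $\sigma(B)$ in the closed upper half-plane, but ``the Hermitian part of $B$ is positive'' must be read as saying that the self-adjoint direct summand of $B$ (its restriction to the spectral subspace of $\mathbb{R}$) is a positive operator --- which your branch delivers. It cannot mean $\operatorname{Re}B\ge 0$: that is impossible whenever $\sigma(N)$ meets the open lower half-plane, since neither square root of such a point lies in the closed first quadrant.
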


\begin{proposition} \ Let $T$ be a hyponormal operator on Hilbert space. \ If $T$ is $2$-normal, then $T$ must be normal. \end{proposition}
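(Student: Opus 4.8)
The plan is to invoke the Radjavi--Rosenthal structure theorem quoted immediately above. Since $T$ is $2$-normal, $T^2$ is normal, so $T$ is the square root of a normal operator, and the theorem applies. Thus $T$ is unitarily equivalent to
$$
A \oplus \begin{pmatrix} B & C \\ 0 & -B \end{pmatrix},
$$
where $A$ and $B$ are normal and $C$ is a positive, one-to-one operator commuting with $B$ (with either summand possibly absent). Because hyponormality is preserved under unitary equivalence and passes to orthogonal direct summands (the self-commutator of a direct sum is the direct sum of the self-commutators), both $A$ and the block $N := \begin{pmatrix} B & C \\ 0 & -B \end{pmatrix}$ inherit hyponormality from $T$. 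The first summand is already normal, so the entire burden is to show that the presence of the block $N$ is incompatible with hyponormality unless $N$ acts on the zero space.

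Next I would compute the $(1,1)$ entry of the self-commutator $N^*N - NN^*$ relative to the given $2\times 2$ decomposition. A direct calculation gives $(N^*N)_{11} = B^*B$ and $(NN^*)_{11} = BB^* + C^2$ (using $C^* = C$), so that, since $B$ is normal,
$$
(N^*N - NN^*)_{11} = B^*B - BB^* - C^2 = -C^2.
$$
Testing the (supposedly positive) operator $N^*N - NN^*$ against vectors of the form $(x,0)$ then yields $\langle (N^*N - NN^*)(x,0),(x,0)\rangle = -\|Cx\|^2 \le 0$, which is strictly negative for every nonzero $x$ because $C$ is one-to-one. This contradicts $N^*N - NN^* \ge 0$ whenever the underlying space of the block is nonzero, so the second summand must be absent; hence $T$ is unitarily equivalent to the normal operator $A$, and therefore $T$ is normal.

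I expect the only delicate points to be the bookkeeping around the structure theorem --- namely, justifying that hyponormality descends to the block summand $N$, and confirming that the testing vectors $(x,0)$ range over a genuinely nonzero subspace exactly when the block is present. The self-commutator computation itself is routine, and it is worth noting that the crucial $(1,1)$ entry requires only that $B$ is normal and $C = C^*$; the commutativity of $B$ and $C$ (and hence any appeal to Fuglede's theorem) is not needed for this argument.
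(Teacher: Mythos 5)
Your proof is correct and follows essentially the same route as the paper: invoke the Radjavi--Rosenthal structure theorem, reduce to the block $\begin{pmatrix} B & C \\ 0 & -B \end{pmatrix}$, and observe that the $(1,1)$-entry of its self-commutator equals $-CC^* = -C^2$, which is incompatible with hyponormality given that $C$ is injective. The only (harmless) difference is bookkeeping: you conclude the block summand must act on the zero space, while the paper concludes $C=0$ and hence $T$ is normal.
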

   \begin{proof}
By the previous theorem, if $T$ is $2$--normal then $T$ is of the form 
$$
T=A \oplus \begin{bmatrix} % or pmatrix or bmatrix or Bmatrix or ...
      B& C\\
     0& -B\\
 \end{bmatrix},
$$
where $A$ and $B$ are normal and $C$ is a positive one-to-one operator commuting with $B$. \ It follows that $T$ is hyponormal if and only if the $2 \times 2$ operator matrix in the above expression, denoted by $Z$, is hyponormal. \ A straightforward calculation reveals that the $(1,1)$-entry of the self-commutator of $Z$ is $B^*B-BB^*-CC^* = -CC^*$, and this forces $C=0$, in which case $T$ is normal.
\end{proof}
  
\begin{example} 
Given a positive integer $k$, let $\mathcal{N}_k^{(n)}$ denote the set of $n$--normal functions, i.e., those functions $\Phi:\mathbb{T} \rightarrow M_k$ such that $\Phi(z)^n$ is a normal matrix $k \times k$ matrix a.e. on $\mathbb{T}$. \ If $\Phi \in L^{\infty}_{M_{k}} \cap \mathcal{N}_k^{(n)}$, then $M_{\Phi}$ acting on $L^{2}_{\mathbb{C}^{k}} $  is $n$--normal. \ It follows that, for $\Phi \in H^{\infty}_{M_{k}} \cap \mathcal{N}_k^{(n)}$, we have that $M_{\Phi}$ restricted to the invariant subspace $H^{2}_{\mathbb{C}^{k}} $ is sub-$n$--normal.
\end{example} 
	
\begin{example}
	Let $ \Phi=	\begin{pmatrix}
		0 & z  \\
		0 & 0
\end{pmatrix} \in L^{\infty}(M_{2}$). \ Since  $ \Phi$ is not normal, $T_{\Phi}$ is not hyponormal (again, by \cite[Theorem 3.3]{GHR})  and so $T_{\Phi}$ is not subnormal. \ But $M^{2}_{\Phi}=0$, and so $M^{2}_{\Phi}=0$ is normal. \ Therefore, $T_{\Phi}$ is sub-$2$-normal. \ On the other hand, since $T^{2}_{\Phi} =0$, we see that $T_{\Phi}$ is $2$--subnormal.	\ Therefore, $T_{\Phi}$ is both sub-$2$--normal and $2$--subnormal.
\end{example}

\begin{example}\label{ee}
Let $T$ be a weighted shift with weights $\{a, b, 1,1,1, \ldots \}$ where $0<a<b<1$. \ The operator $T$ is hyponormal but not subnormal by \cite[Problem 160]{Halmos3}. \ However, $T^2$ is unitarily equivalent to the direct sum of two subnormal weighted shifts, with weight sequences $ab,1,1,1,\ldots$ and $b,1,1,1,\ldots$, respectively. \ It follows that $T$ is $2$--subnormal but not even quadratically hyponormal (using \cite[Theorem 2]{CurtoQHWS}). \ (These shifts were studied in detail in \cite{curto}.)
\end{example}

\begin{example} \ For $0<a<b<c<1$, consider the unilateral weighted shift $W_{\alpha}$ with weight sequence $\alpha_0:=a,\alpha_1:=b,\alpha_2:=c,\alpha_3:=1,\alpha_4:=1,\ldots$\;. \ It is well known that $W_\alpha$ is hyponormal. \ However, $W_\alpha^2$ is not subnormal, being unitarily equivalent to the orthogonal direct sum of two weighted shifts, with weight sequences $ab, c, 1, \ldots$ and $bc,1,1,\ldots$. \ Moreover, $W_\alpha$ is not quadratically hyponormal \cite{CurtoQHWS,bridge}, and this implies that $W_\alpha$ cannot be subnormal. \ On the other hand, $W_\alpha$ is $3$--subnormal, since $W_\alpha^3$ is unitarily equivalent to the orthogonal direct sum of three weighted shifts, with weight sequences $abc, 1, 1, \ldots$, $bc,1,1,\ldots$, and $c,1,1,\ldots$\;. \ 

\noindent We claim that $W_\alpha$ is not sub-$2$--normal. \ Assume, to the contrary, that $W_\alpha$ is the restriction of a $2$--normal operator, that is, there exists a Hilbert space $\mathcal{K} \supseteq \mathcal{H}$ and a $2$--normal operator $T \in B(\mathcal{K})$ such that
$$
T=\left( 
\begin{array}{cc}
W_\alpha & R \\
0 & S
\end{array}
\right),
$$
with respect to the orthogonal decomposition $\mathcal{K} = \mathcal{H} \oplus \mathcal{H}^\perp$. \ Assume further that $T$ is the minimal $2$-normal extension of $W_\alpha$, that is, $T$ does not have any nontrivial reducing subspaces. \ 

We now recall the Radjavi-Rosenthal representation of $2$--normal operators \cite{RadRos}, that is, $T$ is unitarily equivalent to a direct sum of the form 
$$
A \oplus \left( 
\begin{array}{cc}
B & C \\
0 & -B
\end{array}
\right),
$$
where $A$ and $B$ are normal, and $C$ is a positive injective operator commuting with $B$. \ Since we are considering the minimal $2$-normal extension of $W_\alpha$, the normal part $A$ must be absent since otherwise the space on which $A$ acts would correspond (via the unitary equivalence) to a reducing subspace of $T$, contradicting minimality. \ Therefore $T$ is unitarily equivalent to 
$$
Z:=\left( 
\begin{array}{cc}
B & C \\
0 & -B
\end{array}
\right).
$$
We now compute the squares of $T$ and $Z$. \ We see that 
$$
T^2=\left( 
\begin{array}{cc}
W_\alpha^2 & A_\alpha R + RS \\
0 & S^2
\end{array}
\right),
$$
while
$$
Z^2 = \left( 
\begin{array}{cc}
B^2& 0 \\
0 & B^2
\end{array}
\right).
$$
Since $Z^2$ is normal, so must be $T^2$, and since $\mathcal{H}$ is invariant under $T$, we conclude that $W_\alpha^2$ is subnormal, a contradiction. \ We have therefore established that $3$--subnormal operators may fail to be sub-$2$-normal.  
\end{example}
	
The following result is an easy extension of Proposition 19.1.7 in \cite{Book1}; we give a proof, for the reader's convenience.
\begin{proposition}
	An operator that is unitarily equivalent to a sub-$n$--normal operator is sub-$n$--normal.
\end{proposition}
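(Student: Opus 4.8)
The plan is to transport the given $n$--normal extension through the unitary. Suppose $T \in B(\mathcal{H})$ is sub-$n$--normal, so by definition there is a Hilbert space $\mathcal{K} \supseteq \mathcal{H}$ and an $n$--normal operator $S \in B(\mathcal{K})$ with $S\mathcal{H} \subseteq \mathcal{H}$ and $T = S|_{\mathcal{H}}$. Suppose also that $T' \in B(\mathcal{H}')$ is unitarily equivalent to $T$ via a unitary $U : \mathcal{H} \to \mathcal{H}'$, that is, $T' = U T U^*$. I would first form the enlarged space $\mathcal{K}' := \mathcal{H}' \oplus (\mathcal{K} \ominus \mathcal{H})$, so that $\mathcal{H}'$ sits inside $\mathcal{K}'$ as a subspace in exactly the way $\mathcal{H}$ sits inside $\mathcal{K}$.

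Next I would extend $U$ to a unitary $V : \mathcal{K} \to \mathcal{K}'$ by setting $V := U \oplus I$ relative to the decompositions $\mathcal{K} = \mathcal{H} \oplus (\mathcal{K} \ominus \mathcal{H})$ and $\mathcal{K}' = \mathcal{H}' \oplus (\mathcal{K} \ominus \mathcal{H})$; thus $V$ acts as $U$ on $\mathcal{H}$ and as the identity on $\mathcal{K} \ominus \mathcal{H}$, and in particular $V\mathcal{H} = \mathcal{H}'$. I would then define the candidate extension $S' := V S V^* \in B(\mathcal{K}')$. Since $S'$ is unitarily equivalent to the $n$--normal operator $S$, Lemma \ref{n-normal-properties}(v) guarantees that $S'$ is again $n$--normal.

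It then remains to verify the two restriction conditions. For invariance, take $x' \in \mathcal{H}'$; then $V^* x' = U^* x' \in \mathcal{H}$, so $S V^* x' \in \mathcal{H}$ by invariance of $\mathcal{H}$ under $S$, whence $S' x' = V(S V^* x') \in V\mathcal{H} = \mathcal{H}'$, showing $S'\mathcal{H}' \subseteq \mathcal{H}'$. For the restriction itself, the same computation combined with $S|_{\mathcal{H}} = T$ gives $S' x' = V(S U^* x') = V(T U^* x') = U T U^* x' = T' x'$, where the third equality uses that $T U^* x' \in \mathcal{H}$, so that $V$ acts as $U$ there. Hence $S'|_{\mathcal{H}'} = T'$, and $T'$ is sub-$n$--normal.

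I expect no genuine obstacle here: the argument is a direct transport of structure, and the only point requiring care is bookkeeping, namely ensuring that $V$ is built to respect both orthogonal decompositions simultaneously, so that it carries $\mathcal{H}$ onto $\mathcal{H}'$, intertwines $S$ with $S'$, and restricts to $U$ on $\mathcal{H}$.
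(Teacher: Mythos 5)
Your proposal is correct and follows essentially the same route as the paper: both extend $U$ to the unitary $V = U \oplus I$ with respect to the decompositions $\mathcal{K} = \mathcal{H} \oplus (\mathcal{K}\ominus\mathcal{H})$ and $\mathcal{K}' = \mathcal{H}' \oplus (\mathcal{K}\ominus\mathcal{H})$, and take $S' = VSV^*$ as the $n$--normal extension of $T'$. The only cosmetic difference is that the paper verifies $n$--normality of $VSV^*$ by direct computation with $S^n S^{*n} = S^{*n}S^n$, whereas you invoke the known fact that $n$--normality is preserved under unitary equivalence; your explicit check of the invariance of $\mathcal{H}'$ and of $S'|_{\mathcal{H}'} = T'$ is if anything more careful than the paper's.
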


\begin{proof}
Let $T \in B(\mathcal{H}_{1}) $ be sub-n-normal and let $S \in B(\mathcal{K}_{1})$ be n-normal extension of $T$.  Suppose that $U: \mathcal{H}_{1} \rightarrow \mathcal{H}_{2}$ is a unitary operator such that $UTU^*=A \in B(\mathcal{H}_{2})$.  Let  $V: \mathcal{K}_{1} \rightarrow \mathcal{K}_{2}$ be the operator defined by $V=U \oplus I$,  where $\mathcal{K}_{1} =  \mathcal{H}_{1} \oplus (\mathcal{K}_{1} \ominus \mathcal{H}_{1}) $ and $\mathcal{K}_{2}=\mathcal{K}_{2} =  \mathcal{H}_{2} \oplus (\mathcal{K}_{1} \ominus \mathcal{H}_{1}) $.  Since $V$ is unitary and $S$ is n-normal, we have
$$(VSV^{*})^{n}(VS^{*}V^{*})^{n}=VS^{n} S^{*n}V^{*}=VS^{*n} S^{n}V^{*}= (VS^{*}V^{*})^{n}(VSV^{*})^{n}$$
That is ,  $VSV^{*}$ is n-normal operator on $\mathcal{K}_{2}$ and if $x\in \mathcal{H}_{2}$,  then it is easy to see that $VSV^{*} \mid_{\mathcal{H}_{2}}  =A$.  Consequently,  $A$ has a $n$--normal extension. 
\end{proof}

Appealing to \cite[Theorem 1]{Bram} (and its proof), we now state:
\begin{proposition}\label{Br}\cite{Bram}
	An operator $T\in B(\mathcal{H})$ is $n$--subnormal if and only if  \\ $\Sigma^{k}_{i,j} \langle (T^n)^{j}x_{i}, (T^{n})^{i}x_{j}\rangle \geq 0$ for every finite set $x_{0}, x_{1}, \ldots ,x_{k}$ in $\mathcal{H}$.
\end{proposition}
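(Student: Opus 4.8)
The plan is to reduce the statement to the classical Bram--Halmos characterization of subnormality, applied not to $T$ itself but to the operator $T^n$. By Definition 3.1, $T$ is $n$--subnormal precisely when $T^n$ is subnormal, so the entire content of the proposition is a translation of the subnormality criterion for $S := T^n$ into the displayed inequality.

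First I would recall the Bram--Halmos characterization recorded in the introduction: an operator $S \in B(\mathcal{H})$ is subnormal if and only if $\Sigma^{k}_{i,j=0} \langle S^{j}x_{i}, S^{i}x_{j}\rangle \geq 0$ for every $k \ge 0$ and every finite set $x_{0}, x_{1}, \ldots, x_{k}$ in $\mathcal{H}$ (this is exactly \cite[Theorem 1]{Bram}). I would then substitute $S = T^n$ and observe that the resulting positivity condition, namely $\Sigma^{k}_{i,j} \langle (T^n)^{j}x_{i}, (T^n)^{i}x_{j}\rangle \geq 0$, coincides verbatim with the inequality in the statement. Combining this with the equivalence of Definition 3.1 yields both implications simultaneously: $T$ is $n$--subnormal $\iff$ $T^n$ is subnormal $\iff$ the displayed family of inequalities holds.

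There is no substantial obstacle here, since the result is an immediate specialization of Bram's theorem under the single observation that $n$--subnormality is subnormality of $T^n$. The only point worth making explicit is that the finite-set positivity condition for $T^n$ is the same family of inequalities, indexed by $k$ and by the choices of vectors $x_0, \ldots, x_k$, as the one appearing in the statement; thus nothing beyond this change of the underlying operator is required, and the proof is complete once the substitution is recorded.
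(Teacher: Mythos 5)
Your proof is correct and matches the paper's intent exactly: the paper gives no separate argument, simply introducing the proposition with ``Appealing to \cite[Theorem 1]{Bram} (and its proof), we now state,'' i.e., it likewise treats the result as the Bram--Halmos positivity criterion applied to $S = T^n$ combined with the definition of $n$--subnormality. Nothing further is needed.
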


\begin{theorem}
	If $T$ is sub-$n$--normal, then it is the $n$--th root of a hyponormal operator; equivalently, $T^n$ is hyponormal.
\end{theorem}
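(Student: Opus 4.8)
The plan is to work directly from the definition of sub-$n$-normality and to reduce the assertion to the elementary fact that the restriction of a normal operator to an invariant subspace is subnormal (and hence hyponormal). Since $T$ is sub-$n$-normal, there exist a Hilbert space $\mathcal{K} \supseteq \mathcal{H}$ and an $n$-normal operator $S \in B(\mathcal{K})$ with $S\mathcal{H} \subseteq \mathcal{H}$ and $T = S|_{\mathcal{H}}$. Recall that $S$ being $n$-normal means precisely that $S^n$ is normal.

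First I would verify that the invariance of $\mathcal{H}$ under $S$ propagates to $S^n$: from $S\mathcal{H} \subseteq \mathcal{H}$ one obtains $S^n \mathcal{H} = S^{n-1}(S\mathcal{H}) \subseteq \cdots \subseteq \mathcal{H}$, so $\mathcal{H}$ is invariant under $S^n$ as well. The second, equally routine, observation is that taking powers commutes with restriction to an invariant subspace, so that $T^n = (S|_{\mathcal{H}})^n = S^n|_{\mathcal{H}}$.

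Combining these two facts, $T^n$ is exactly the restriction of the normal operator $S^n$ to its invariant subspace $\mathcal{H}$. By definition this makes $T^n$ subnormal, and since every subnormal operator is hyponormal, $T^n$ is hyponormal; equivalently, $T$ is an $n$-th root of a hyponormal operator. In fact this argument yields the stronger conclusion that $T^n$ is subnormal, i.e., that $T$ is $n$-subnormal, which records the inclusion of the sub-$n$-normal operators in the $n$-subnormal operators asserted in the abstract.

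Honestly, there is no serious obstacle here: the entire content is the bookkeeping identity $T^n = S^n|_{\mathcal{H}}$ together with the definitional characterization of subnormality. The only point deserving genuine care is the claim that $T^n$ \emph{coincides} with $S^n$ restricted to $\mathcal{H}$, rather than merely agreeing with it on a part of $\mathcal{H}$; this is precisely what the invariance of $\mathcal{H}$ under every power of $S$ secures.
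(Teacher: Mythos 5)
Your proof is correct, but it finishes by a different route than the paper's. You observe that $\mathcal{H}$ is invariant under $S^n$, that $T^n = S^n|_{\mathcal{H}}$, and hence that $T^n$ is subnormal by definition, after which you invoke the classical implication that subnormal operators are hyponormal. The paper's proof never passes through subnormality of $T^n$: it establishes the hyponormality inequality directly, by first showing that $T^{*n}x = PS^{*n}x$ for all $x \in \mathcal{H}$ (where $P$ is the orthogonal projection of $\mathcal{K}$ onto $\mathcal{H}$) and then estimating
$$
\| T^{*n}x \| = \| PS^{*n}x \| \leq \| S^{*n}x \| = \| S^{n}x \| = \| T^{n}x \|,
$$
the middle equality being the normality of $S^n$. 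In effect, the paper inlines the standard proof that subnormal implies hyponormal, specialized to the pair $(T^n, S^n)$, whereas you use that implication as a black box. Your route buys something extra: the stronger conclusion that $T^n$ is subnormal, i.e., that $T$ is $n$--subnormal, which is exactly the corollary the paper proves separately right after its Bram--Halmos-type theorem, using the same observation that $S^n$ is a normal extension of $T^n$. The paper's route, on the other hand, is self-contained at the level of norm inequalities and makes explicit the projection identity $T^{*n} = PS^{*n}|_{\mathcal{H}}$, which it reuses in later arguments.
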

\begin{proof}
	Suppose $S$  be the $n$--normal extension on $\mathcal{K}$  of $T \in B(\mathcal{H})$. \  We have $Ty=Sy$ for all $y\in \mathcal{H}$. \ Let $P$ be the projection from $\mathcal{K}$ on to $\mathcal{H}$.
	Now, $$\langle T^{*n}x, y\rangle=\langle x, T^{n}y\rangle=\langle S^{*n}x, P y\rangle=\langle PS^{*n}x,  y\rangle$$
	Since the operator $PS^{n}$ on $\mathcal{K}$ leaves $\mathcal{H}$ invariant, its restriction to $\mathcal{H}$ is
	an operator on $\mathcal{H}$. \ From the above calculations, $T^{*n}x= PS^{*n}x$ $x\in \mathcal{H}$. \ Then\\
	$\| T^{*n}x\|= \| PS^{*n}x\| \leq \| S^{*n}x\|=\| S^{n}x\|$ (by the normality of $S^{n}$ )= $\| T^{n}x\|$.
	That is,  $T^{n}$ is hyponormal.
\end{proof}
In \cite{Halmos3}, Halmos characterized subnormal operators as follows.
\begin{theorem}(\cite{Halmos3})
	An operator $T\in B(\mathcal{H})$ is subnormal if and only if (1) $\Sigma^{k}_{m,n} \langle T^{n}x_{m}, T^{m}x_{n}\rangle \geq 0$ for every finite set $x_{0}, x_{1}, \ldots ,x_{k}$ in $\mathcal{H}$., and (2) for every finite set $x_{0}, x_{1}, \ldots ,x_{k}$ in $\mathcal{H}$,  there exist a positive constant $c$ such that $\Sigma^{k}_{m,n} \langle T^{n+1}x_{m}, T^{m+1}x_{n}\rangle \leq c \cdot \Sigma^{k}_{m,n} \langle T^{n}x_{m}, T^{m}x_{n}\rangle$
\end{theorem}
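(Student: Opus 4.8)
The plan is to prove the two implications separately, with the ``only if'' direction reducing to a transparent computation in the spectral representation of a normal extension, and the ``if'' direction carrying all the weight via an explicit construction of such an extension (this is essentially the Bram--Halmos construction).

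For necessity, let $N$ on $\mathcal K \supseteq \mathcal H$ be a normal extension of $T$. Since $\mathcal H$ is invariant, $N^j x = T^j x$ for every $x \in \mathcal H$ and every $j \ge 0$, so it suffices to verify (1) and (2) with $T$ replaced by $N$. I would invoke the spectral theorem to represent $N$ as multiplication by the coordinate function $z$ on a suitable $L^2$-space $L^2(\nu)$, under which each $x_m$ becomes a function $\hat x_m$. Then $\langle N^n x_m, N^m x_n\rangle = \int z^{\,n}\bar z^{\,m}\,\hat x_m\,\overline{\hat x_n}\,d\nu$, and summing over $m,n$ collapses the double sum to $\int \big|\sum_m \bar z^{\,m}\hat x_m\big|^2\,d\nu \ge 0$, which is (1). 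The identical bookkeeping yields $\sum_{m,n}\langle N^{n+1}x_m, N^{m+1}x_n\rangle = \int |z|^2\big|\sum_m \bar z^{\,m}\hat x_m\big|^2\,d\nu$, and since $|z|\le \|N\|$ $\nu$-almost everywhere, this is at most $\|N\|^2$ times the previous integral; hence (2) holds with $c=\|N\|^2$.

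The substance lies in the converse, and the guiding idea is that condition (1) should manufacture the inner product of the extension space while condition (2) manufactures the boundedness of the extension. Concretely, I would work on the space $\mathcal P$ of finitely supported $\mathcal H$-valued sequences $\mathbf x = (x_0, x_1, \dots)$ and equip it with the Hermitian form $[\mathbf x, \mathbf y] := \sum_{m,n}\langle T^n x_m, T^m y_n\rangle$; this is modeled on the identity $[\mathbf x, \mathbf y] = \langle \sum_m N^{*m} x_m, \sum_n N^{*n} y_n\rangle$ that would hold in any normal extension. Condition (1) says precisely that this form is positive semidefinite, so quotienting by its null space $\mathcal N := \{\mathbf x : [\mathbf x, \mathbf x]=0\}$ and completing produces a genuine Hilbert space $\mathcal K$, into which $\mathcal H$ embeds isometrically via $x \mapsto [(x,0,0,\dots)]$, because $[(x,0,\dots),(x,0,\dots)] = \|x\|^2$. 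On $\mathcal P$ I would then introduce two operators: the ``entrywise $T$'' map $\tilde T \mathbf x := (Tx_0, Tx_1, \dots)$ and the shift $M\mathbf x := (0, x_0, x_1, \dots)$. A single index shift shows that $[\tilde T \mathbf x, \tilde T \mathbf x] = [M\mathbf x, M\mathbf x] = \sum_{m,n}\langle T^{n+1}x_m, T^{m+1}x_n\rangle$, which by condition (2) is at most $c\,[\mathbf x, \mathbf x]$; hence both maps send $\mathcal N$ into itself and descend to bounded operators on $\mathcal K$. A further index shift verifies the adjoint relation $[\tilde T \mathbf x, \mathbf y] = [\mathbf x, M\mathbf y]$, so setting $N := \tilde T$ we obtain $M = N^*$ on $\mathcal K$. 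Finally, on $\mathcal P$ one checks directly that $\tilde T M = M \tilde T$ (both send $\mathbf x$ to $(0, Tx_0, Tx_1, \dots)$), whence $N N^* = N^* N$ and $N$ is normal; and since $N[(x,0,\dots)] = [(Tx,0,\dots)]$, the subspace $\mathcal H$ is invariant with $N|_{\mathcal H}=T$, exhibiting $T$ as subnormal.

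I expect the main obstacle to be entirely in this converse, and specifically in the passage from a mere semi-inner product to an honest Hilbert space. One must confirm that $\tilde T$ and $M$ are \emph{well defined} on the quotient $\mathcal P/\mathcal N$, which is exactly where condition (2) is indispensable: it bounds $[\tilde T\mathbf x, \tilde T\mathbf x]$ and $[M\mathbf x, M\mathbf x]$ by $c\,[\mathbf x,\mathbf x]$ and thereby guarantees that $\mathcal N$ is preserved. One must also check that the bounded extensions of $\tilde T$ and $M$ to the completion $\mathcal K$ continue to satisfy the adjoint and commutation identities established only on the dense subspace $\mathcal P/\mathcal N$, which follows from continuity. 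Everything else---the isometric embedding of $\mathcal H$, the normality of $N$, and the extension property---reduces to the elementary index manipulations indicated above.
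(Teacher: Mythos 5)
The paper offers no proof of this statement: it is quoted directly from Halmos's book \cite{Halmos3}, so there is no in-paper argument to compare yours against. Your proof is the classical Bram--Halmos (GNS-type) construction, and it is correct. Condition (1) makes the sesquilinear form $[\mathbf{x},\mathbf{y}]=\sum_{m,n}\langle T^{n}x_{m},T^{m}y_{n}\rangle$ positive semidefinite; condition (2) makes both the entrywise-$T$ map $\tilde T$ and the shift $M$ bounded relative to this form (hence well defined on the quotient, since they preserve the null space); your index computations establishing $[\tilde T\mathbf{x},\tilde T\mathbf{x}]=[M\mathbf{x},M\mathbf{x}]$, the adjoint relation $[\tilde T\mathbf{x},\mathbf{y}]=[\mathbf{x},M\mathbf{y}]$, and the commutation $\tilde TM=M\tilde T$ are all right; and continuity correctly transfers these identities from the dense quotient to the completion, yielding a bounded normal extension restricting to $T$ on the isometrically embedded copy of $\mathcal{H}$.

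One minor repair is needed in the necessity direction: a general normal operator $N$ is \emph{not} unitarily equivalent to multiplication by the coordinate function $z$ on a scalar space $L^{2}(\nu)$ unless it is $*$-cyclic. You should either use the vector-valued (direct-integral) spectral representation, or represent $N$ as multiplication by a bounded measurable function $\varphi$ on some $L^{2}(\mu)$, replacing $|z|\le\|N\|$ by $|\varphi|\le\|N\|$ a.e. Cleaner still---and in the spirit of the computation the paper itself carries out when proving its sub-$n$--normal analogue (Theorem \ref{t1})---is the purely algebraic route: by normality,
$$
\sum_{m,n}\langle N^{n}x_{m},N^{m}x_{n}\rangle=\Bigl\|\sum_{m}N^{*m}x_{m}\Bigr\|^{2}\ge 0
\quad\text{and}\quad
\sum_{m,n}\langle N^{n+1}x_{m},N^{m+1}x_{n}\rangle=\Bigl\|N^{*}\sum_{m}N^{*m}x_{m}\Bigr\|^{2}\le\|N\|^{2}\Bigl\|\sum_{m}N^{*m}x_{m}\Bigr\|^{2},
$$
which gives (1) and (2) with $c=\|N\|^{2}$ and no spectral theorem at all.
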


We now state and prove, for sub-$n$--normal operators, a theorem that mimics the above result. 

\begin{theorem}\label{t1}
	If $T\in B(\mathcal{H})$ be a sub-$n$--normal operator, then \\(1) $\Sigma^{k}_{i,j=0} \langle (T^{n})^{j}x_{i}, (T^{n})^{i}x_{j}\rangle \geq 0$ for every finite set $x_{0}, x_{1}, \cdot ,x_{k}$ in $\mathcal{H}$, and \\(2) there exist a positive constant $c$ such that
	$$\Sigma^{k}_{i,j=0} \langle (T^{n})^{j+1}x_{i}, (T^{n})^{i+1}x_{j}\rangle \leq c \Sigma^{n}_{i,j=0} \langle (T^{n})^{j}x_{i}, (T^{n})^{i}x_{j}\rangle$$
	for every finite set $x_{0}, x_{1}, \ldots ,x_{k}$ in $\mathcal{H}$.
\end{theorem}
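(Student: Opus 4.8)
The plan is to reduce the statement to the Halmos characterization of subnormality recorded in the immediately preceding theorem, applied not to $T$ itself but to the power $T^n$. The crucial preliminary observation is that sub-$n$-normality of $T$ forces $T^n$ to be subnormal. First I would unpack the defining data: by hypothesis there is a Hilbert space $\mathcal{K} \supseteq \mathcal{H}$ and an $n$-normal operator $S \in B(\mathcal{K})$ with $S\mathcal{H} \subseteq \mathcal{H}$ and $T = S|_{\mathcal{H}}$. Since $\mathcal{H}$ is invariant under $S$, a routine induction shows that $\mathcal{H}$ is invariant under $S^n$ as well, and that $T^n = (S|_{\mathcal{H}})^n = S^n|_{\mathcal{H}}$. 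Because $S$ is $n$-normal, $S^n$ is normal, so $T^n$ is precisely the restriction of a normal operator to one of its invariant subspaces; that is, $T^n$ is subnormal.

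With the subnormality of $A := T^n$ in hand, the two displayed inequalities are nothing but the two conditions of the Halmos characterization, written out for the operator $A$. Condition (1) is the positive-semidefiniteness of the Bram-Halmos form $\Sigma_{i,j} \langle A^{j}x_{i}, A^{i}x_{j}\rangle$ for the subnormal operator $A$, and condition (2) is the associated uniform boundedness requirement $\Sigma_{i,j} \langle A^{j+1}x_{i}, A^{i+1}x_{j}\rangle \leq c\, \Sigma_{i,j} \langle A^{j}x_{i}, A^{i}x_{j}\rangle$. Both conclusions then follow immediately by substituting $A = T^n$ into that theorem. (I would note in passing that the upper summation limit $n$ appearing on the right-hand side of part (2) should read $k$, to match the left-hand sum.)

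I do not anticipate a genuine obstacle here: once the reduction ``$T$ sub-$n$-normal $\Rightarrow T^n$ subnormal'' is made, the theorem is an immediate corollary of the quoted Halmos result. The only point deserving care is the invariance bookkeeping, namely verifying that restriction to $\mathcal{H}$ commutes with taking the $n$-th power, which is handled by the elementary induction above. It is worth remarking that this argument is the exact analog, one level up, of the classical fact that a subnormal operator satisfies the Bram-Halmos positivity and Halmos boundedness conditions, with the role of the normal extension now played by the normal operator $S^n$ rather than $S$.
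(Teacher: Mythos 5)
Your proof is correct, but it takes a genuinely different (and more modular) route than the paper's. You reduce the statement to the observation that $T^n = S^n|_{\mathcal{H}}$ is subnormal, because the $n$--normal extension $S$ has $S^n$ normal and $\mathcal{H}$ invariant, and then you invoke the Halmos characterization as a black box for $A=T^n$. The paper never passes through subnormality of $T^n$; instead it proves both inequalities by direct computation with the extension $S$: using normality of $S^n$ to commute $(S^n)^{*i}$ past $(S^n)^j$, it shows
$$
\sum_{i,j=0}^{k}\bigl\langle (T^n)^j x_i,\,(T^n)^i x_j\bigr\rangle \;=\;\Bigl\|\sum_{j=0}^{k}(S^n)^{*j}x_j\Bigr\|^2\;\ge\;0,
$$
and then obtains part (2) by substituting $y_i := T^n x_i = S^n x_i$ into the same identity, which (after pulling $S^n$ out of the sum) yields the explicit uniform constant $c=\|S^n\|^2$. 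Notably, your key reduction is exactly the paper's ``alternative'' proof of the Corollary immediately following Theorem \ref{t1} (sub-$n$--normal $\Rightarrow$ $n$--subnormal); the paper derives that corollary \emph{from} Theorem \ref{t1}, while you invert the logical order — which is legitimate and non-circular, since the reduction uses only the definition. What your route buys is brevity and conceptual clarity; what the paper's route buys is a self-contained argument that exhibits the quadratic form as a norm squared and makes the constant $c$ explicit.

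One caveat on the citation: as the Halmos theorem is \emph{phrased} in the paper, the quantifiers in condition (2) read ``for every finite set \dots there exists $c$,'' i.e.\ $c$ may depend on the set, whereas Theorem \ref{t1} asserts a single $c$ valid for all finite sets. Your reading of (2) as a uniform boundedness requirement is the mathematically correct one (the forward direction of Halmos's theorem gives $c=\|N\|^2$ for $N$ the minimal normal extension, uniformly), but citing the paper's statement verbatim would deliver only the formally weaker, set-dependent conclusion; either appeal to the standard uniform form of Halmos's result, or note that the direct computation above supplies $c=\|S^n\|^2$ explicitly.
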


\begin{proof}
	By the definition of sub-$n$--normal operator, there exists an $n$--normal operator $S$ on $\mathcal{K}$ such that $\mathcal{H} \subseteq \mathcal{K}$ and $Tx=Sx$ for all $x\in \mathcal{H}$, and so $T^{*n}x= PS^{*n}x$.
	Let $x_{0}, x_{1}, \ldots ,x_{k}$ in $\mathcal{H}$. \ Then
	\begin{align*}
		\Sigma^{k}_{i,j=0} \langle (T^{n})^{j}x_{i}, (T^{n})^{i}x_{j}\rangle &=\Sigma^{k}_{i,j=0} \langle(S^{n})^{j}x_{i}, (S^{n})^{i}x_{j}\rangle
		\\&=\Sigma^{k}_{i,j=0} \langle (S^{n})^{* i}(S^{n})^{j}x_{i}, x_{j}\rangle
		\\&=\Sigma^{k}_{i,j=0} \langle (S^{n})^{j} (S^{n})^{*i}x_{i}, x_{j}\rangle   \quad (\textrm{since } \text{ } (S^{n})^*(S^{n})=(S^{n})(S^{n})^* )
		\\&=\Sigma^{k}_{i,j=0} \langle (S^{n})^{*i}x_{i}, (S^{n})^{*j}x_{j}\rangle=\| \Sigma^{k}_{j} (S^{n})^{*j}x_{j}\|^{2}.
	\end{align*}
	This completes the first part of the proof. \ Putting $y_{i}:=T^{n}x_{i}$ in the preceding expressions, it is easy to see that
	$$\Sigma^{k}_{i,j=0} \langle (T^{n})^{j+1}x_{i}, (T^{n})^{i+1}x_{j}\rangle \leq  c \Sigma^{k}_{i,j=0} \langle (T^{n})^{j}x_{i}, (T^{n})^{i}x_{j}\rangle$$
	for every finite set $x_{0}, x_{1}, \ldots ,x_{k}$ in $\mathcal{H}$, which establishes the second  part of the proof.
\end{proof}

\begin{corollary}
If $T$ is sub-$n$--normal, then $T$ is $n$--subnormal.
\end{corollary}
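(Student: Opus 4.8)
The plan is to give a direct structural argument, exploiting the definition of sub-$n$--normality together with the elementary fact that restriction to an invariant subspace commutes with taking powers. Suppose $T \in B(\mathcal{H})$ is sub-$n$--normal, so that there is a Hilbert space $\mathcal{K} \supseteq \mathcal{H}$ and an $n$--normal operator $S \in B(\mathcal{K})$ with $S\mathcal{H} \subseteq \mathcal{H}$ and $T = S|_{\mathcal{H}}$. The first observation I would record is that $\mathcal{H}$, being invariant for $S$, is automatically invariant for every power $S^m$; consequently an easy induction on $m$ shows that $T^m x = S^m x$ for all $x \in \mathcal{H}$, i.e. $T^m = S^m|_{\mathcal{H}}$. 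In particular $T^n = S^n|_{\mathcal{H}}$.

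The crux is then immediate. Since $S$ is $n$--normal, $S^n$ is normal (this is precisely the alternative formulation of $n$--normality recorded in the introduction). Thus $T^n$ is the restriction of the normal operator $S^n$ to the $S^n$--invariant subspace $\mathcal{H}$, which is exactly the definition of a subnormal operator. Hence $T^n$ is subnormal, that is, $T$ is $n$--subnormal.

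I would also point out that the same conclusion can be reached purely from the results already established in this section, giving an internal consistency check: Theorem \ref{t1}(1) shows that a sub-$n$--normal $T$ satisfies the positivity condition $\Sigma^{k}_{i,j=0} \langle (T^n)^{j}x_{i}, (T^n)^{i}x_{j}\rangle \geq 0$ for every finite family $x_0, \ldots, x_k$ in $\mathcal{H}$, and Proposition \ref{Br} identifies exactly this condition with $n$--subnormality of $T$. I do not expect any genuine obstacle here, as the statement is essentially structural; the only point requiring care is the verification that powers commute with restriction to an invariant subspace, which is what makes the identity $T^n = S^n|_{\mathcal{H}}$ — and hence the whole argument — work.
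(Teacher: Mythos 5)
Your proposal is correct and follows essentially the same route as the paper, which likewise observes that $T^n y = S^n y$ for $y \in \mathcal{H}$ so that the normal operator $S^n$ is a normal extension of $T^n$; your secondary argument via Theorem \ref{t1} and Proposition \ref{Br} is also exactly the paper's first-mentioned justification. The only difference is that you spell out the (routine) induction showing restriction commutes with powers, which the paper leaves implicit.
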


\begin{proof} This follows easily from Theorem \ref{t1}. \ Alternatively, let $T$ be sub-$n$--normal. \ Then, by definition,  $T$ has an $n$--normal extension $S$ on $\mathcal{K}$. \ Given $y \in \mathcal{H}$, we have  $T^{n}y = S^{n}y$; that is, $S^n$ is a normal extension of $T^n$. \ It follows that $T$ is $n$--subnormal.
\end{proof}

Using the preceding results and examples, we obtain the following implications: \bigskip

$\begin{array}{ccccc} 
&& \framebox[2.0in][c]{
$\begin{array}{c}
\textrm{subnormal} \\
\end{array}$
} && \\
& \mbox{\Huge{\rotatebox{45}{$\Longleftarrow$}}} \hspace*{-.1in} && \mbox{\Huge{\rotatebox{90}{\rotatebox{45}{$\Longleftarrow$}}}} \\

\framebox[1.3in][l]{
$\begin{array}{c}
\textrm{sub-$n$--normal} \\
\end{array}$ 
}  &&\mbox{\Huge{$\Longrightarrow$}}&& 
\framebox[1.5in][l]{
$\begin{array}{c}
\textrm{$n$--subnormal} \\
\end{array}$
} \\
&  \mbox{\Huge{\rotatebox{90}{\rotatebox{45}{$\Longleftarrow$}}}} \hspace*{-.1in} && \mbox{\Huge{\rotatebox{45}{$\Longleftarrow$}}} & \label{diagram}  \\
&& \framebox[2.2in][l]{
$\begin{array}{c}
\textrm{$n$--th root of hyponormal} \\
\end{array}$}
\end{array}$

\bigskip
\begin{definition}
	Let $T$ be a sub-$n$--normal on $\mathcal{H}$ and let $S$ be an $n$--normal extension of $T$ on $\mathcal{K}$. \ We shall say that $S$  is a minimal $n$--normal extension of $T$ if $\mathcal{K}$ has no proper subspace containing it to which the restriction of $S$ is also a $n$--normal extension of $T$.
\end{definition}

\begin{theorem} \label{thm 229}
	Let $T$ be a sub-$n$--normal on $\mathcal{H}$ and let $S$ be an $n$--normal extension of $T$ on $\mathcal{K}$. \ Then $S$ is minimal $n$--normal extension of $T$ if and only if 
	$$
	\mathcal{K}=
	\bigvee\{(S^{*})^{nk}h: h\in \mathcal{H} \text{ and
	} k=0,1\}.
	$$
\end{theorem}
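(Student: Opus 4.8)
The plan is to reduce the statement to the identification of the smallest subspace of $\mathcal{K}$ on which $S$ restricts to an $n$--normal extension of $T$, and then to compute that subspace explicitly. Set $N:=S^{n}$. Since $S$ is $n$--normal, $N$ is normal, and since $\mathcal{H}$ is $S$--invariant it is $N$--invariant, so $N|_{\mathcal{H}}=T^{n}$. The first step is the observation that, for a closed subspace $\mathcal{L}$ with $\mathcal{H}\subseteq\mathcal{L}\subseteq\mathcal{K}$, the restriction $S|_{\mathcal{L}}$ is a well-defined $n$--normal extension of $T$ precisely when $\mathcal{L}$ is invariant under $S$ and reduces $N$. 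Indeed, $S$--invariance is needed for $S|_{\mathcal{L}}$ to be defined and to satisfy $(S|_{\mathcal{L}})^{n}=N|_{\mathcal{L}}$, and a restriction of the normal operator $N$ to an invariant subspace is itself normal if and only if that subspace reduces $N$. Consequently $S$ is a minimal $n$--normal extension of $T$ if and only if $\mathcal{K}$ is the smallest $S$--invariant, $N$--reducing subspace containing $\mathcal{H}$.

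The second step is to identify this smallest subspace with $\mathcal{M}:=\bigvee\{(S^{*})^{nk}h:h\in\mathcal{H},\ k=0,1,2,\dots\}$, the closed linear span of the orbit of $\mathcal{H}$ under $N^{*}=(S^{*})^{n}$. The key technical input is Fuglede's theorem: since $S$ commutes with the normal operator $N=S^{n}$, it also commutes with $N^{*}=(S^{*})^{n}$. Using this, for any $h\in\mathcal{H}$ and $k\geq 0$ we have $S(N^{*})^{k}h=(N^{*})^{k}(Sh)$ with $Sh\in\mathcal{H}$, so $\mathcal{M}$ is $S$--invariant; likewise $N(N^{*})^{k}h=(N^{*})^{k}(Nh)$ with $Nh\in\mathcal{H}$, by normality of $N$, so $\mathcal{M}$ is $N$--invariant, and it is $N^{*}$--invariant by construction. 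Hence $\mathcal{M}$ reduces $N$ and carries an $n$--normal extension of $T$. Conversely, every $S$--invariant, $N$--reducing subspace containing $\mathcal{H}$ must contain each vector $(N^{*})^{k}h$, hence must contain $\mathcal{M}$; thus $\mathcal{M}$ is exactly the smallest such subspace. Combining the two steps yields the asserted equivalence: $S$ is minimal if and only if $\mathcal{K}=\mathcal{M}$.

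The step I expect to require the most care is matching the orbit description $\mathcal{M}$ with the displayed span, i.e., fixing the range of the index $k$. I would read $k=0,1$ in the statement as an abbreviation for $k=0,1,2,\dots$, and this reading is essential: if one allowed only the two values $k\in\{0,1\}$, the identity $\mathcal{K}=\mathcal{H}\vee(S^{*})^{n}\mathcal{H}$ would already fail for $n=1$ and $T$ the unilateral shift, whose minimal normal extension is the bilateral shift on $L^{2}$, strictly larger than $\mathcal{H}\vee S^{*}\mathcal{H}$. With $k$ ranging over all nonnegative integers there is a single span, namely $\mathcal{M}$, so the two descriptions coincide by definition and the content of the theorem is carried entirely by the Fuglede-based orbit computation above. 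I would therefore present the argument with the full orbit and make the index convention explicit at the point where the displayed formula is invoked.
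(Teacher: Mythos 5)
Your proof is correct, and it takes a genuinely different---and sounder---route than the paper's own argument. The paper works with the literal two--term span $\mathcal{L}=\mathcal{H}\vee S^{*n}\mathcal{H}$ (i.e.\ $k\in\{0,1\}$) and tries to prove two things: that $S|_{\mathcal{L}}$ is itself an $n$--normal extension of $T$, and that $\mathcal{L}$ is contained in every subspace carrying such an extension. For the first claim it identifies $S_{\mathcal{L}}^{*n}$ with $P_{\mathcal{L}}S^{*n}|_{\mathcal{L}}$ and verifies the commutation identities only on vectors of $\mathcal{H}$; for the second it concludes $S^{*n}f=P_{\mathcal{M}}S^{*n}f$ from the single scalar identity $\langle S^{*}f,P_{\mathcal{M}}S^{*n}f\rangle=\langle S^{*}f,S^{*n}f\rangle$. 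Neither inference is valid as written: in general $\bigl(P_{\mathcal{L}}S^{*}|_{\mathcal{L}}\bigr)^{n}\neq P_{\mathcal{L}}S^{*n}|_{\mathcal{L}}$, identities checked only on $\mathcal{H}$ do not yield $n$--normality of $S_{\mathcal{L}}$ on all of $\mathcal{L}$, and equality of one inner product does not force equality of vectors. Your unilateral--shift example is precisely the witness that these gaps are fatal rather than cosmetic: for $n=1$, the bilateral shift $S$ on $L^{2}(\mathbb{T})$ is the minimal normal extension of $U_{+}$ on $H^{2}$, yet $\mathcal{H}\vee S^{*}\mathcal{H}=\overline{\operatorname{span}}\{z^{j}:j\ge -1\}\subsetneq L^{2}$, and $S$ restricted to that span is again a unilateral shift, hence not normal. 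So the theorem as literally stated fails in the ``only if'' direction, and your decision to read $k=0,1,2,\dots$ is not a convention but a necessary repair of the statement.

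With that reading, your argument is essentially the classical Conway-style proof for minimal normal extensions, correctly adapted: the subspaces of $\mathcal{K}$ carrying $n$--normal extensions of $T$ are exactly the $S$--invariant subspaces containing $\mathcal{H}$ that reduce $N=S^{n}$ (a restriction of a normal operator to an invariant subspace is normal iff the subspace reduces it), and the full orbit $\bigvee_{k\ge 0}(S^{*n})^{k}\mathcal{H}$ is the smallest such subspace. Two small remarks. First, you do not actually need Fuglede's theorem: the paper's definition of $n$--normality, $S^{*}S^{n}=S^{n}S^{*}$, gives $SS^{*n}=S^{*n}S$ upon taking adjoints, which is all your invariance computations use (though invoking Fuglede is of course legitimate). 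Second, it is worth noting that the ``if'' direction of the paper's statement does survive with $k\in\{0,1\}$: if $\mathcal{K}=\mathcal{H}\vee S^{*n}\mathcal{H}$, then any $S$--invariant, $N$--reducing $\mathcal{M}\supseteq\mathcal{H}$ contains $S^{*n}\mathcal{H}$ and hence equals $\mathcal{K}$; it is only the converse that forces the full orbit, which is exactly where your counterexample and the paper's proof collide.
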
	

\begin{proof}
	Write $\mathcal{L}=\bigvee\{(S^{*})^{nk}h; h\in \mathcal{H}  \text{ and
	}k=0,1\}$. \ It is evident that $\mathcal{H}\subseteq \mathcal{L}$. \ Since $S$ is $n$--normal, $S^{*}$ is $n$--normal and so we see that  $S \mathcal{L}\subseteq \mathcal{L}$, by using the fact that $Sh=Th$ for $h\in \mathcal{H}$. \ 	
	Let $S_{\mathcal{L}}=S|_{\mathcal{L}}$. \ Since $S^{*n}f \in L$ and $S \mathcal{L}\subseteq \mathcal{L}$, we have $S_{\mathcal{L}}^{*n}S_{\mathcal{L}}f=P_{L}S^{*n}Sf=P_{\mathcal{L}}SS^{*n}f=SS^{*n}f$
	and 	$S_{\mathcal{L}}S_{\mathcal{L}}^{*n}f=SP_{\mathcal{L}}S^{*n}f=SS^{*n}f$ for $f\in \mathcal{H}$ and $P_{\mathcal{L}}$ the orthogonal projection of $\mathcal{K}$ onto $\mathcal{L}$. \ So $S_{\mathcal{L}}$ is $n$--normal and $\mathcal{H}\subseteq ker[S_{\mathcal{L}}^{*n}, S_{\mathcal{\mathcal{L}}}]$. \ Now we have to show that $\mathcal{L}$ is minimal. \ Let $\mathcal{M}\subseteq \mathcal{K}$ be such that $S\mathcal{M}\subseteq \mathcal{M}$ and $S|_{\mathcal{M}}$ is $n$--normal extension of $T$. \ Since $\mathcal{H}\subseteq ker[S_{\mathcal{M}}^{*n}, S_{\mathcal{M}}]$, $P_{\mathcal{M}}S^{*n}Sf= SP_{\mathcal{M}}S^{*n}f$ for $f\in \mathcal{H}$.\\
	Now,
	\begin{align*}
		\langle S^{*}f,  P_{\mathcal{M}}S^{*n}f\rangle&= 	\langle f,  SP_{\mathcal{M}}S^{*n}f\rangle\\
		&=\langle f,  P_{\mathcal{M}}S^{*n}Sf\rangle\\
		&=\langle f, S^{*n}Sf\rangle\\
		&=\langle f, SS^{*n}f\rangle\\
		&=	\langle S^{*}f, S^{*n}f\rangle
	\end{align*}
	Hence $S^{*n}f=P_{\mathcal{M}}S^{*n}f$ and so $S^{*n}f \in \mathcal{M}$, i.e., $\mathcal{L}\subseteq \mathcal{M}$. \ This completes the proof.
\end{proof}

\begin{proposition}
	For $k=1,2$, let $T_{k}$  be a sub-$n$--normal operator on $\mathcal{H}_{k}$ and let $S_{k}$ be a minimal $n$--normal extension on the Hilbert space $\mathcal{K}_{k}$. \ If $T_{1}$  and $T_{2}$ are unitarily equivalent then so are the minimal $n$--normal extensions $S_1$ and $S_2$. 
\end{proposition}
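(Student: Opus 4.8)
The plan is to construct a single unitary $W:\mathcal{K}_1 \to \mathcal{K}_2$ that simultaneously extends the given unitary equivalence between $T_1$ and $T_2$ and intertwines $S_1$ and $S_2$. Write $U:\mathcal{H}_1 \to \mathcal{H}_2$ for the unitary with $UT_1 = T_2 U$, so that also $UT_1^n = T_2^n U$. The first step is to recognize that $S_k^n$ is exactly the classical \emph{minimal normal extension} of $T_k^n$. Indeed, $S_k^n$ is normal because $S_k$ is $n$--normal, it extends $T_k^n$ because $\mathcal{H}_k$ is $S_k$--invariant with $S_k|_{\mathcal{H}_k}=T_k$, and by Theorem \ref{thm 229} we have $\mathcal{K}_k = \bigvee\{(S_k^*)^{nj}h : h\in\mathcal{H}_k,\ j=0,1\} = \bigvee\{(S_k^n)^{*j}h : h\in\mathcal{H}_k,\ j=0,1\}$, using $(S_k^*)^{nj}=(S_k^n)^{*j}$. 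Since $\bigvee\{(S_k^n)^{*j}h : h\in\mathcal{H}_k,\ j\ge 0\}$ is squeezed between this space and $\mathcal{K}_k$, all three coincide, which is precisely the spanning condition characterizing the minimal normal extension of the subnormal operator $T_k^n$.

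By the classical uniqueness (up to unitary equivalence) of minimal normal extensions of subnormal operators, the equivalence $UT_1^n = T_2^n U$ then lifts to a unitary $W:\mathcal{K}_1 \to \mathcal{K}_2$ with $W|_{\mathcal{H}_1}=U$ and $W S_1^n = S_2^n W$. (Concretely, $W$ is the continuous extension of $\sum_j (S_1^*)^{nj}h_j \mapsto \sum_j (S_2^*)^{nj}Uh_j$; its isometry and surjectivity follow from the normality of $S_k^n$, which reduces the relevant inner products $\langle (S_k^n)^{*i}h,(S_k^n)^{*j}g\rangle$ to expressions of the form $\langle (T_k^n)^a h,(T_k^n)^b g\rangle$ that agree across $k=1,2$ via $UT_1^n=T_2^n U$.)

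It remains to upgrade the intertwining from the $n$--th powers to the operators themselves, i.e.\ to prove $WS_1 = S_2 W$; this is the crux. Set $A := W^* S_2 W \in B(\mathcal{K}_1)$, which is $n$--normal, being unitarily equivalent to $S_2$. Two facts are immediate: first, $A^n = W^* S_2^n W = S_1^n =: N_1$; second, for $h\in\mathcal{H}_1$ one has $Ah = W^* S_2(Uh) = W^*(T_2 Uh) = U^*(UT_1 h) = T_1 h$, using $W|_{\mathcal{H}_1}=U$ (so that $W^*|_{\mathcal{H}_2}=U^*$), the $S_2$--invariance of $\mathcal{H}_2$, and $T_2 U = UT_1$. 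Hence $A$ and $S_1$ agree on $\mathcal{H}_1$ and share the common $n$--th power $N_1$. Now put $B := A - S_1$, so $B|_{\mathcal{H}_1}=0$. Both $A$ and $S_1$ commute with $N_1$, since $AN_1 = A^{n+1}=N_1 A$ and $S_1 N_1 = S_1^{n+1}=N_1 S_1$; therefore $B$ commutes with $N_1$, and because $N_1$ is normal, Fuglede's theorem gives $BN_1^* = N_1^* B$. Invoking once more the spanning description $\mathcal{K}_1 = \bigvee\{(N_1^*)^k h : h\in\mathcal{H}_1,\ k\ge 0\}$ from the first step, we compute $B(N_1^*)^k h = (N_1^*)^k Bh = 0$ on each spanning vector, whence $B=0$ on $\mathcal{K}_1$. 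Thus $A = S_1$, that is, $WS_1 = S_2 W$, and $S_1, S_2$ are unitarily equivalent.

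The step I expect to be the main obstacle is exactly this last upgrade. The unitary supplied by the uniqueness of minimal normal extensions only intertwines $S_1^n$ and $S_2^n$ a priori, and to pass to $S_1$ and $S_2$ one must commute the adjoint powers $(N_1^*)^k$ past the \emph{non-normal} operators $A$ and $S_1$. Fuglede's theorem, applied to the normal operator $N_1 = S_1^n$ with which both $A$ and $S_1$ commute, is precisely what makes this possible, and the spanning formula of Theorem \ref{thm 229} is what then propagates the equality $Bh=0$ on $\mathcal{H}_1$ to all of $\mathcal{K}_1$.
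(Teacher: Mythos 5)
Your proof is correct, but it follows a genuinely different route from the paper's. The paper argues directly: using the spanning formula of Theorem \ref{thm 229}, it defines $V(S_1^{*nk}h) := S_2^{*nk}Uh$ for $h \in \mathcal{H}_1$ and $k=0,1$, checks by hand that $V$ is isometric (expanding $\|Uf + S_2^{*n}Ug\|^2$ via the normality of $S_2^n$ and $UT_1 = T_2U$ --- the very computation you outsource to the classical uniqueness theorem), extends $V$ to a unitary of $\mathcal{K}_1$ onto $\mathcal{K}_2$, and then verifies the intertwining $VS_1 = S_2V$ directly on the spanning vectors $h$ and $S_1^{*n}h$, the key tool being the identity $S_1S_1^{*n} = S_1^{*n}S_1$ (the adjoint of the defining relation of $n$--normality); no appeal to Fuglede's theorem is made. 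You instead isolate the observation --- implicit but never stated in the paper --- that $S_k^n$ is exactly the \emph{minimal normal extension} of the subnormal operator $T_k^n$ (your squeeze argument upgrading the $j=0,1$ span of Theorem \ref{thm 229} to the full span over $j \ge 0$ is valid), invoke the classical uniqueness theorem for minimal normal extensions to produce $W$ with $W|_{\mathcal{H}_1}=U$ and $WS_1^n = S_2^nW$, and then promote this intertwining of $n$--th powers to $WS_1 = S_2W$ by the Fuglede argument applied to $N_1 = S_1^n$ combined once more with the spanning formula; each step there ($A^n = N_1$, $A|_{\mathcal{H}_1} = S_1|_{\mathcal{H}_1}$, $B$ commuting with $N_1$ and hence with $N_1^*$, and $B$ annihilating the total set) checks out. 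Each approach has its merits: yours reduces the proposition to classical subnormal-operator theory and makes explicit the structural relationship between minimal $n$--normal extensions and minimal normal extensions of $T^n$, which is conceptually illuminating; the paper's is self-contained and purely algebraic, avoiding Fuglede entirely, with the intertwining verified in two lines once the unitary is constructed. Both proofs rest squarely on Theorem \ref{thm 229}.
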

\begin{proof}
	Suppose that $U: \mathcal{H}_{1} \rightarrow \mathcal{H}_{2}$ is a unitary operator such that $UT_{1}=T_{2}U$. \ Define $V$ on $\mathcal{K}_{1}$ by
	$$
	V(S^{*nk}_{1}h)=S^{*nk}_{2}Uh, ( h\in \mathcal{H}_{1}, k=0,1).
	$$
Note that $V\mid_{H_{1}}=U$. \ For $f,g \in \mathcal{H}_{1}$,
		\begin{align*}
			||Uf+S^{*n}_{2}Ug||& =( Uf+S^{*n}_{2}g, Uf+S^{*n}_{2}g)\\
			&=(Uf,Uf)+(Ug, S^{n}_{2}Uf)+ (S^{n}_{2}Uf, Ug)+ (S^{n}_{2}Ug, S^{n}_{2}Ug) \\ &(\textrm{because } S^{n}_{2}S^{*n}_{2}=S^{*n}_{2}S^{n}_{2} \textrm{ for all } h \in \mathcal{H}_{2}) \\
			&=(Uf,Uf)+(Ug, US^{n}_{1}f)+ (US^{n}_{1}f, Ug)+ (US^{n}_{1}g, US^{n}_{1}g) =||Uf+S^{*n}_{1}g || \\
			&(\textrm{because U is unitary  and } UT_{1}=T_{2}U \Rightarrow US_{1}=S_{2}U \Rightarrow US^{n}_{1}=S^{n}_{1}U )
		\end{align*}
		
		Thus $$
		V (f+S^{*n}_{1})=Uf+S^{*n}_{2}Ug.
		$$
		$V$ extends to a unitary operator from $\mathcal{K}_{1}$ onto $\mathcal{K}_{2}$, Moreover,  $VS_{1}=S_{2}V $ holds from the following observation. \ For $h\in \mathcal{H}_{1}$
		$$ VS_{1}h=US_{1}h=S_{2}Uh=S_{2}Vh$$
		$$ VS_{1}(S^{*n}_{1}h)=VS^{*n}_{1}S_{1}h=S^{*n}_{2}US_{1}h=S^{*n}_{2}S_{2}Uh=S_{2}S^{*n}_{2}Uh=S_{2}V(S^{*n}_{1}h)$$
		This completes the proof.
	\end{proof}
	
	\begin{corollary}
		If $T\in \mathcal{B}(\mathcal{H})$ is sub-$n$--normal operator and $S_{1}$  and $S_{2}$ are  minimal $n$--normal extensions of $T$, then  $S_{1}$  and $S_{2}$ are  unitary equivalent.
	\end{corollary}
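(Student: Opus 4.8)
The plan is to obtain this corollary as an immediate specialization of the preceding Proposition. That Proposition asserts that if $T_1$ on $\mathcal{H}_1$ and $T_2$ on $\mathcal{H}_2$ are sub-$n$--normal operators with minimal $n$--normal extensions $S_1$ and $S_2$, and if $T_1$ and $T_2$ are unitarily equivalent, then $S_1$ and $S_2$ are unitarily equivalent. To deduce the corollary I would simply take $T_1 = T_2 = T$ (so that $\mathcal{H}_1 = \mathcal{H}_2 = \mathcal{H}$) and choose the intertwining unitary to be the identity operator $U = I_{\mathcal{H}}$, which trivially satisfies $U T_1 = T_2 U$. Since $S_1$ and $S_2$ are, by hypothesis, minimal $n$--normal extensions of the \emph{same} operator $T$, the Proposition applies verbatim and produces a unitary $V : \mathcal{K}_1 \to \mathcal{K}_2$ with $V S_1 = S_2 V$, which is precisely the assertion that $S_1$ and $S_2$ are unitarily equivalent.

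If one prefers not to invoke the Proposition as a black box, the same conclusion can be reached by tracing through its construction in the special case $U = I$. One defines $V$ on the linear manifold $\operatorname{span}\{S_1^{*nk} h : h \in \mathcal{H},\ k = 0,1\}$ by $V(S_1^{*nk} h) := S_2^{*nk} h$, verifies (using the $n$--normality of $S_1$ and $S_2$ together with the identity $S_j^n h = T^n h$ for $h \in \mathcal{H}$) that $V$ is isometric via the norm computation carried out in the Proposition, and then extends $V$ to a unitary from $\mathcal{K}_1$ onto $\mathcal{K}_2$. Here the extension step relies on Theorem \ref{thm 229}, which guarantees that these spanning manifolds are dense in $\mathcal{K}_1$ and $\mathcal{K}_2$, respectively. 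The intertwining relation $V S_1 = S_2 V$ is then checked on the spanning vectors exactly as before.

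Because the corollary is a direct instance of a result already established, I do not anticipate any genuine obstacle. The only points requiring even minimal care are confirming that the identity map qualifies as the unitary intertwiner demanded by the Proposition's hypothesis---which is immediate---and observing that both $S_1$ and $S_2$ are assumed minimal outright in the statement, so the density needed for the extension of $V$ is available from Theorem \ref{thm 229}.
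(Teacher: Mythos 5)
Your proposal is correct and matches the paper's (implicit) argument: the corollary is stated there as an immediate consequence of the preceding Proposition, obtained exactly by taking $T_1 = T_2 = T$ and the intertwining unitary $U = I_{\mathcal{H}}$. Your additional remarks on tracing the construction with $U = I$ and on the role of Theorem \ref{thm 229} for density are consistent with the paper's proof of that Proposition.
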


The following result extends to sub-$n$--normality a well-known fact in the theory of subnormal operators (cf. \cite[Problem 200]{Halmos3}.

\begin{proposition}
If $T$ is sub-$n$--normal and $S$ is the minimal $n$--normal extension of $T$, then  $\sigma_{a}(T) \subseteq \sigma(S) \subseteq \sigma(T)$ and $\partial \sigma(T) \subseteq \partial \sigma(S)$.
\end{proposition}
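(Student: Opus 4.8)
The plan is to prove the three inclusions separately, treating $\sigma_a(T)\subseteq\sigma(S)$ and $\partial\sigma(T)\subseteq\partial\sigma(S)$ as the soft ends and $\sigma(S)\subseteq\sigma(T)$ as the core. For the first inclusion I would argue directly from the fact that $S$ extends $T$: if $\lambda\in\sigma_a(T)$, pick unit vectors $x_m\in\mathcal{H}$ with $(T-\lambda)x_m\to 0$; since $Sx_m=Tx_m$ for $x_m\in\mathcal{H}$, we get $(S-\lambda)x_m=(T-\lambda)x_m\to 0$, so $\lambda\in\sigma_a(S)\subseteq\sigma(S)$. This step uses neither minimality nor $n$--normality.

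The boundary inclusion I would then deduce formally from the other two together with the standard fact $\partial\sigma(T)\subseteq\sigma_a(T)$. Indeed, if $\lambda\in\partial\sigma(T)$ then $\lambda\in\sigma_a(T)\subseteq\sigma(S)$ by the first inclusion; on the other hand, every neighborhood of $\lambda$ contains points of $\rho(T)$, and since $\sigma(S)\subseteq\sigma(T)$ is equivalent to $\rho(T)\subseteq\rho(S)$, every neighborhood of $\lambda$ also meets $\rho(S)$. Hence $\lambda\in\sigma(S)\cap\overline{\rho(S)}=\partial\sigma(S)$. Thus the boundary statement comes for free once $\sigma(S)\subseteq\sigma(T)$ is established.

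The heart of the matter is $\sigma(S)\subseteq\sigma(T)$, equivalently $\rho(T)\subseteq\rho(S)$. Fixing $\lambda\in\rho(T)$, the goal is to invert $S-\lambda$. The two ingredients I would exploit are the commutation relation $SS^{*n}=S^{*n}S$, which is precisely the adjoint of the defining identity $S^*S^n=S^nS^*$ of $n$--normality, and the minimality criterion of Theorem \ref{thm 229}, which says that $\mathcal{K}=\mathcal{H}\vee S^{*n}\mathcal{H}$. Using $Sg=Tg$ for $g\in\mathcal{H}$ together with the commutation relation, one computes on the dense manifold $\mathcal{D}:=\mathcal{H}+S^{*n}\mathcal{H}$ that $(S-\lambda)(h+S^{*n}g)=(T-\lambda)h+S^{*n}(T-\lambda)g$. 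This suggests the candidate two-sided inverse $R(h+S^{*n}g):=(T-\lambda)^{-1}h+S^{*n}(T-\lambda)^{-1}g$, and the proof would consist in showing that $R$ is well defined on $\mathcal{D}$ and extends to a bounded operator on $\mathcal{K}$, whence $R=(S-\lambda)^{-1}$.

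I expect the well-definedness and boundedness of $R$ to be the main obstacle, and this is exactly where minimality must enter in an essential way. The difficulty is that the representation $x=h+S^{*n}g$ is not unique, so one must check that $R$ respects the relations $h+S^{*n}g=0$; unwinding this reduces to a commutation of $(T-\lambda)^{-1}$ with $S^{*n}$ on the $T$--invariant manifold $\{g\in\mathcal{H}:S^{*n}g\in\mathcal{H}\}$ (its $T$--invariance follows from $S^{*n}Tg=SS^{*n}g$). A clean way to see why minimality is indispensable is a phase obstruction: any attempt to deduce the claim merely from the classical spectral inclusion $\sigma(S^n)\subseteq\sigma(T^n)$ for the subnormal operator $T^n$ (with $S^n$ as its minimal normal extension) must fail, because $z\mapsto z^n$ collapses the $n$--th roots of unity and would only yield $\sigma(S)\subseteq\{\omega\mu:\omega^n=1,\ \mu\in\sigma(T)\}$; the spurious rotated points are eliminated precisely by the reach-$(k=0,1)$ minimality, as already the minimal $2$--normal extension of the identity illustrates. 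I would therefore organize the final step so that the density $\mathcal{K}=\mathcal{H}\vee S^{*n}\mathcal{H}$ is used to propagate, via the commutation relation, the bounded-below estimates for $S-\lambda$ and $S^*-\bar\lambda$ that hold trivially on $\mathcal{H}$ out to all of $\mathcal{K}$, thereby closing the argument.
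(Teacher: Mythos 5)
Your first and third inclusions are fine and coincide with the paper's own argument: $\sigma_a(T)\subseteq\sigma(S)$ follows from $Sx=Tx$ on $\mathcal H$, and $\partial\sigma(T)\subseteq\partial\sigma(S)$ is a formal consequence of $\partial\sigma(T)\subseteq\sigma_a(T)\subseteq\sigma(S)\subseteq\sigma(T)$. The genuine gap is in the core inclusion $\sigma(S)\subseteq\sigma(T)$: what you present there is a plan, not a proof, and it stalls exactly at the point you yourself call ``the main obstacle.'' Concretely: (a) well-definedness of $R$ requires $(T-\lambda)^{-1}S^{*n}g=S^{*n}(T-\lambda)^{-1}g$ whenever $S^{*n}g\in\mathcal H$; applying $S-\lambda$ to both sides shows they have the \emph{same image} under $S-\lambda$, so the identity holds only modulo $\ker(S-\lambda)$ --- and showing that kernel is trivial is part of what you are trying to prove. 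The commutation relation $S^{*n}(S-\lambda)=(S-\lambda)S^{*n}$ passes to $(S-\lambda)^{-1}$ only \emph{after} invertibility is known, so this route is circular; note also that $T$-invariance of $\{g\in\mathcal H:S^{*n}g\in\mathcal H\}$ does not give $(T-\lambda)^{-1}$-invariance of that (non-closed) manifold. (b) The proposed ``propagation'' of lower bounds fails: bounded-belowness of $S-\lambda$ and $S^{*}-\bar\lambda$ on vectors of $\mathcal H$ gives no estimate on the dense manifold $\mathcal D=\mathcal H+S^{*n}\mathcal H$, because $x=h+S^{*n}g$ can have tiny norm while $h$ and $S^{*n}g$ are large (there is no angle or orthogonality control between $\mathcal H$ and $S^{*n}\mathcal H$), so the identity $(S-\lambda)x=(T-\lambda)h+S^{*n}(T-\lambda)g$ yields no bound in terms of $\|x\|$. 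A lower bound valid on a dense subspace would indeed extend by continuity, but you never obtain one on $\mathcal D$, only on $\mathcal H$. Your root-of-unity remark is correct as far as it goes (it explains why reducing to subnormality of $T^n$ is insufficient), but it does not repair the construction.

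For comparison, the paper closes this step by a different mechanism, in which minimality enters through \emph{reducing} subspaces rather than through the density criterion of Theorem \ref{thm 229}. Reducing to the case $T$ invertible, let $E$ be the spectral measure of the normal operator $S^{n}$ and put $\mathcal M=E(B(0,\varepsilon))\mathcal K$ with $\varepsilon<\|T^{-1}\|^{-n}$. For $f\in\mathcal M$ and $g\in\mathcal H$ one has $|(f,g)|=|(S^{*nk}f,T^{-nk}g)|\le\|S^{nk}f\|\,\|T^{-nk}\|\,\|g\|\le(\varepsilon\|T^{-1}\|^{n})^{k}\|f\|\,\|g\|\to0$, where normality of $S^{n}$ converts $\|S^{*nk}f\|$ into $\|S^{nk}f\|\le\varepsilon^{k}\|f\|$; hence $\mathcal M\perp\mathcal H$. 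Since $\mathcal M$ reduces $S$ (because $S$ commutes with $S^{n}$, hence with $E$), the restriction $S|_{\mathcal M^{\perp}}$ is again an $n$--normal extension of $T$, and minimality forces $\mathcal M=\{0\}$; thus $0\notin\sigma(S^{n})$, so $S^{n}$, and therefore $S$, is invertible. If you wish to keep your explicit-inverse construction, you must supply an independent proof that $\ker(S-\lambda)=\{0\}$ (to get well-definedness of $R$) together with an actual norm estimate on $\mathcal D$; neither is present, and the spectral-subspace argument above is the standard way around the circularity.
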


\begin{proof}
If $\lambda \in \sigma_{a}(T)$, then there is a sequence of unit vectors $\{g_{n}\}$ in $\mathcal{H}$  such that $ || (T-\lambda)g_{n}|| \rightarrow 0$ and so $ || (S-\lambda)g_{n}|| \rightarrow 0$. \ Thus, $  \sigma_{a}(T) \subseteq \sigma_{a}(S) \subseteq \sigma(S) $. \ Now we show that $\sigma(S) \subseteq \sigma(T)$. \ It is enough to prove if $T$ is invertible then so is $S$. \ Since $S$ is $n$--normal, $S^{n}$ is normal. \ Then, by the spectral theorem, $S^{n}=\int zdE(z)$, and let $\mathcal{M}=E(B(0,\varepsilon))\mathcal{K}$ be the reducing subspace for $S^{n}$. \ Then for $f \in \mathcal{M}$, $ || (S^{n})^{k}f||\leq \varepsilon^{k}f$, $k=1,2,3.....$. \ Now for $f \in \mathcal{M}$ and $g \in \mathcal{M}$
\begin{align*}% requires amsmath; align* for no eq. number
|(f,g)| &= | (f,T^{nk}T^{-nk}g)|\\
&=|(f,S^{nk}T^{-nk}g)|\\
&=|(S^{*nk} f,  T^{-nk}g)|\\
&\leq || (S^{*nk} f||\cdot ||T^{-nk}||\cdot ||g||
\end{align*}
Since $S^{n}$ is normal, $|(f,g)| \leq  \varepsilon^{k} ||f||\cdot ||T^{-nk}||\cdot ||g||$. \ If $\varepsilon < ||T^{-1}||^{-n}$,  then  $\varepsilon ||T^{-1}||^{n}<1$. \ Thus  $(\varepsilon ||T^{-1}|| ^{n})^{k} \rightarrow 0 $ as  $k \rightarrow \infty$. \ Hence $(f,g)=0$. \ That is $\mathcal{M}\perp \mathcal{H}$  if $\varepsilon < ||T^{-n}||^{-1}$ and so $\mathcal{H} \subseteq \mathcal{M}^{\perp}$. \ But   $S^{n}\mid_{\mathcal{M}^{\perp}}$ is a normal extension and so $S \mid_{\mathcal{M}^{\perp}}$ 
is an $n$--normal extension of $T$. \ By the minimality of $n$--normal extension, $\mathcal{M}^{\perp}=\mathcal{K}$ and so $\mathcal{M}=\{0\}$. \ Thus  $S^{n} $ is invertible and so $S$ is invertible. \  Since $\sigma_{a}(T) \subseteq \sigma(S) \subseteq \sigma(T)$ , it is easy to see that $\partial \sigma(T) \subseteq \partial \sigma(S)$.
\end{proof}

In the following result, by a {\it spectral set} for a bounded operator $T$ we mean, as is customary, a set $X \subseteq \mathbb{C}$ such that $\sigma(T) \subseteq X$ and von Neumann's inequality holds for $T$ on $X$.

\begin{corollary}
For a sub-$n$--normal operator $T$,  $\sigma(T)$ is a spectral set. 
\end{corollary}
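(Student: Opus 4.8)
The plan is to reproduce, for sub-$n$-normality, the classical argument showing that the spectrum of a subnormal operator is a spectral set (cf. \cite[Problem 200]{Halmos3}), replacing the minimal normal extension by the minimal $n$-normal extension $S$ of $T$ on $\mathcal{K}$ provided by Theorem \ref{thm 229}. \ Write $X:=\sigma(T)$. \ The decisive input from the preceding proposition is the inclusion $\sigma(S)\subseteq\sigma(T)=X$, together with the facts that $S$ extends $T$ and that $\mathcal{H}$ is $S$-invariant.

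First I would verify that the rational functional calculus of $S$ based on $X$ restricts correctly to $T$. \ Fix a rational function $r$ whose poles lie off $X$. \ Since $\sigma(S)\subseteq X$, the operator $r(S)$ is defined; and since $(S-\lambda)|_{\mathcal{H}}=T-\lambda$ for every $\lambda$, one checks that $(S-\lambda)^{-1}$ leaves $\mathcal{H}$ invariant and agrees there with $(T-\lambda)^{-1}$ whenever $\lambda\notin\sigma(T)$. \ Hence $\mathcal{H}$ is invariant under $r(S)$ and $r(S)|_{\mathcal{H}}=r(T)$, which immediately gives $\|r(T)\|\le\|r(S)\|$. \ The problem is thereby reduced to von Neumann's inequality for the extension, i.e.\ to the chain
$$\|r(T)\|\le\|r(S)\|\le\sup_{z\in\sigma(S)}|r(z)|\le\sup_{z\in X}|r(z)|,$$
where the last inequality is again $\sigma(S)\subseteq X$.

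The main obstacle is the middle inequality $\|r(S)\|\le\sup_{z\in\sigma(S)}|r(z)|$. \ In the subnormal case the extension is \emph{normal}, so this is nothing but the spectral theorem; here, however, $S$ is only $n$-normal, meaning that $S^{n}$ is normal while $S$ itself need not be, and this is exactly where the argument must work harder. \ My approach would be to pass to the normal operator $N:=S^{n}$, which is a normal extension of $T^{n}$ with $\sigma(N)=\sigma(S)^{n}\subseteq\sigma(T)^{n}=\sigma(T^{n})$; the classical subnormal argument then applies verbatim to $T^{n}$ and shows that $\sigma(T^{n})$ is a spectral set for $T^{n}$. \ The delicate point — and the step I expect to be genuinely hard — is to transfer von Neumann's inequality from functions of $S^{n}$ back to \emph{arbitrary} rational functions $r$ of $S$, since a general $r(z)$ does not factor through $z^{n}$ and the normality of $S^{n}$ controls only the part of the calculus that does. \ I would address this either by invoking a Radjavi--Rosenthal-type structural decomposition of the $n$-normal operator $S$ (as in \cite{RadRos} for $n=2$) to estimate $\|r(S)\|$ over its normal building blocks, or by a spectral-mapping reduction from $T^{n}$ to $T$; making this transfer rigorous, and checking that it yields the inequality on $\sigma(T)$ rather than merely on $\sigma(T)^{n}$, is the crux of the proof.
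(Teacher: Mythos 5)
Your reduction of the problem to the inequality $\|r(S)\|\le\sup_{z\in\sigma(S)}|r(z)|$ for the minimal $n$--normal extension $S$ is accurate, and the step you flag as the crux is indeed where everything hinges --- but it is not merely hard, it is impossible, because the corollary as stated is false. Take the paper's own example $A=\begin{bmatrix} 0 & 1\\ 0 & 0\end{bmatrix}$: since $A^2=0$, the operator $A$ is $2$--normal, hence sub-$2$--normal (it is its own $2$--normal extension). Here $\sigma(A)=\{0\}$, and with $r(z)=z$ we get $\|r(A)\|=\|A\|=1$ while $\sup_{z\in\sigma(A)}|r(z)|=0$, so von Neumann's inequality fails. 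More generally, a singleton $\{\lambda\}$ is a spectral set only for the scalar operator $\lambda I$ (apply the inequality to $r(z)=z-\lambda$), so no nonzero nilpotent operator --- and the paper exhibits several sub-$n$--normal ones --- can have its spectrum as a spectral set. Consequently neither of your proposed repairs (a Radjavi--Rosenthal decomposition of $S$, or a spectral-mapping reduction) can close the gap: normality of $S^n$ controls only rational functions that factor through $z^n$, and your own reduction already yields the correct salvageable statement, namely that $\sigma(T^n)$ is a spectral set for the subnormal operator $T^n$ --- nothing more can be extracted.

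For comparison, the paper's proof of this corollary is a single sentence asserting $\|f(T)\|\le\sup\{|f(\mu)|:\mu\in\sigma(T)\}$ ``by the above theorem,'' i.e., from the inclusion $\sigma(S)\subseteq\sigma(T)$. That is precisely the non sequitur you isolated: a spectral inclusion involving a non-normal extension carries no norm estimate whatsoever. So your analysis is sharper than the paper's; the honest conclusion is not that the transfer step awaits a clever argument, but that the corollary requires either a counterexample (as above) or a reformulation in terms of $T^n$.
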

\begin{proof}
Let $f$ be a rational functions with no poles in $\sigma(T)$. \ Then By above theorem, it follows that 
$ |f(T)| \leq sup\{ f(\mu) : \mu \in  \sigma(T) \}$. 
\end{proof} 
For subnormal operators, the following result was obtained by J. Bram in \cite{Bram}.  
\begin{proposition}
Let $T$ be a sub-$n$--normal and let $S$ be the minimal $n$--normal extension of $T$. \ If $Y$ is a bounded connected component of the complement of $\sigma(S)$  in $\mathbb{C}$, then $Y$ and  $\sigma(T)$ are disjoint or $Y \subseteq \sigma(T)$.
\end{proposition}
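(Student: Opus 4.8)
The plan is to reduce the statement to a purely topological dichotomy, using the spectral inclusions $\sigma_{a}(T) \subseteq \sigma(S) \subseteq \sigma(T)$ and $\partial\sigma(T) \subseteq \partial\sigma(S)$ already established in the preceding proposition. Since $Y$ is a connected component of the open set $\mathbb{C} \setminus \sigma(S)$, it is itself open and connected and disjoint from $\sigma(S)$; in particular $Y \cap \sigma(S) = \emptyset$.

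The first key step is to observe that $Y$ cannot meet the topological boundary of $\sigma(T)$. Indeed, since $\sigma(S)$ is closed we have $\partial\sigma(S) \subseteq \sigma(S)$, so the preceding proposition gives $\partial\sigma(T) \subseteq \partial\sigma(S) \subseteq \sigma(S)$. Because $Y \cap \sigma(S) = \emptyset$, it follows at once that $Y \cap \partial\sigma(T) = \emptyset$; equivalently, $Y \subseteq \mathbb{C} \setminus \partial\sigma(T)$.

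Next I would invoke the standard partition of the plane induced by the closed set $\sigma(T)$, namely $\mathbb{C} \setminus \partial\sigma(T) = \operatorname{int}\sigma(T) \sqcup (\mathbb{C} \setminus \sigma(T))$, a disjoint union of two open sets (here one uses that $\sigma(T)$ is closed, so the exterior of $\sigma(T)$ coincides with its complement). Since $Y$ is connected and is contained in this disjoint union of open sets, $Y$ must lie entirely inside one of the two pieces. If $Y \subseteq \operatorname{int}\sigma(T)$, then $Y \subseteq \sigma(T)$; if instead $Y \subseteq \mathbb{C} \setminus \sigma(T)$, then $Y$ and $\sigma(T)$ are disjoint. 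This is exactly the claimed alternative.

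I do not anticipate a serious obstacle, as the argument parallels Bram's classical proof in the subnormal case; the only ingredient specific to the sub-$n$--normal setting is the boundary inclusion $\partial\sigma(T) \subseteq \partial\sigma(S)$, which the earlier proposition supplies. The point demanding the most care is simply the correct application of connectedness against the partition of $\mathbb{C} \setminus \partial\sigma(T)$ into the interior and exterior of $\sigma(T)$, together with checking that minimality of $S$ is not actually needed beyond what was already used to secure $\sigma(S) \subseteq \sigma(T)$ and the boundary inclusion.
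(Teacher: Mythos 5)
Your argument is correct: the chain $\partial\sigma(T)\subseteq\partial\sigma(S)\subseteq\sigma(S)$ (using that $\sigma(S)$ is closed), together with the openness and connectedness of the component $Y$ and the partition $\mathbb{C}\setminus\partial\sigma(T)=\operatorname{int}\sigma(T)\sqcup(\mathbb{C}\setminus\sigma(T))$ into disjoint open sets, gives exactly the claimed dichotomy, with minimality of $S$ entering only through the earlier spectral-inclusion proposition (and with boundedness of $Y$ never actually needed). The paper states this proposition without proof, as the sub-$n$--normal analog of Bram's theorem, and your connectedness argument is precisely the standard proof that is being invoked.
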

\begin{proposition}
Let $T$ be a sub-$n$--normal and let $S$ be the minimal $n$--normal extension of $T$. \ If $Y$ is a bounded connected component of the complement of $\sigma(S)$  in $\mathbb{C}$. \ Then the following statements are equivalent. \\
(a)  $Y \cap  \sigma(T)=\emptyset$. \\
(b) For each $\lambda \in Y$, $(S-\lambda)\mathcal{H}=\mathcal{H}$.\\
(c) For each $\lambda \in Y$, $(S-\lambda)^{-1}\mathcal{H} \subseteq \mathcal{H}$.
\end{proposition}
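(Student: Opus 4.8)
The plan is to prove the three conditions equivalent pointwise in $\lambda$. I will fix an arbitrary $\lambda \in Y$ and establish the equivalence of the three assertions $\lambda \notin \sigma(T)$, $\,(S-\lambda)\mathcal{H}=\mathcal{H}$, and $\,(S-\lambda)^{-1}\mathcal{H}\subseteq\mathcal{H}$. Since (a), (b), and (c) are exactly the universal quantifications over $\lambda\in Y$ of these three assertions, the pointwise equivalence delivers the stated result at once. Two structural facts drive everything: first, $Y\cap\sigma(S)=\emptyset$ (as $Y$ is a component of $\mathbb{C}\setminus\sigma(S)$), so $S-\lambda$ is boundedly invertible on $\mathcal{K}$ for each $\lambda\in Y$; second, $\mathcal{H}$ is invariant under $S$, so that $(S-\lambda)|_{\mathcal{H}}$ agrees with $T-\lambda$ as a bounded operator of $\mathcal{H}$ into itself.

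I would then run the cycle (a) $\Rightarrow$ (b) $\Rightarrow$ (c) $\Rightarrow$ (a). For (a) $\Rightarrow$ (b): if $\lambda\notin\sigma(T)$ then $T-\lambda$ maps $\mathcal{H}$ onto $\mathcal{H}$, and since $(S-\lambda)h=(T-\lambda)h$ for $h\in\mathcal{H}$, we obtain $(S-\lambda)\mathcal{H}=(T-\lambda)\mathcal{H}=\mathcal{H}$. For (b) $\Rightarrow$ (c): given $h\in\mathcal{H}$, assumption (b) supplies $g\in\mathcal{H}$ with $(S-\lambda)g=h$, whence $(S-\lambda)^{-1}h=g\in\mathcal{H}$; observe that this step needs no open mapping argument, since $(S-\lambda)^{-1}$ is already a bounded operator on $\mathcal{K}$. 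For (c) $\Rightarrow$ (a): condition (c) says that $\mathcal{H}$ is invariant under the bounded operator $(S-\lambda)^{-1}$, so $R:=(S-\lambda)^{-1}|_{\mathcal{H}}$ is a bounded operator on $\mathcal{H}$; using the $S$-invariance of $\mathcal{H}$ one checks $(T-\lambda)R=R(T-\lambda)=I_{\mathcal{H}}$, so that $R=(T-\lambda)^{-1}$ and hence $\lambda\notin\sigma(T)$.

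The only care required is bookkeeping: one must keep the action of $S-\lambda$ on $\mathcal{K}$ distinct from that of $T-\lambda$ on $\mathcal{H}$, and invoke the $S$-invariance of $\mathcal{H}$ precisely when passing between them (for instance, re-reading $(S-\lambda)(S-\lambda)^{-1}h=h$ as $(T-\lambda)(S-\lambda)^{-1}h=h$ once one knows $(S-\lambda)^{-1}h\in\mathcal{H}$). I do not anticipate a substantive obstacle. In fact the argument uses neither the minimality of $S$ nor the connectedness or boundedness of $Y$, relying only on the fact that $\lambda\notin\sigma(S)$; I would add a brief remark to this effect, since those hypotheses are inherited from the companion proposition immediately preceding and serve to clarify the context rather than the proof.
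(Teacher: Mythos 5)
Your proof is correct. A point of comparison worth noting: the paper states this proposition \emph{without proof} (it is presented, together with the dichotomy proposition just before it, as the sub-$n$--normal analog of Bram's classical results for subnormal operators), so your argument in effect supplies an argument the paper omits. Checking it step by step: the two structural facts you isolate ($S-\lambda$ invertible on $\mathcal{K}$ for $\lambda\in Y$, and $(S-\lambda)h=(T-\lambda)h$ for $h\in\mathcal{H}$ by invariance) are exactly what is needed; the cycle (a) $\Rightarrow$ (b) $\Rightarrow$ (c) $\Rightarrow$ (a), carried out pointwise in $\lambda$, is complete, and the passage from the pointwise equivalences to the universally quantified statements is immediate. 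Your closing remark is also accurate and worth keeping: the argument uses only $\lambda\notin\sigma(S)$, never the minimality of $S$ nor the boundedness or connectedness of $Y$; those hypotheses do real work only in the companion dichotomy proposition (where connectedness forces $Y\cap\sigma(T)$ to be all of $Y$ or empty), and the equivalence proved here is what makes that dichotomy usable. One tiny bookkeeping point you handled correctly but should keep explicit in a final write-up: in (c) $\Rightarrow$ (a), the identity $(T-\lambda)Rh=h$ requires first noting $Rh=(S-\lambda)^{-1}h\in\mathcal{H}$ so that $S-\lambda$ may be replaced by $T-\lambda$ on that vector.
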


Let $C^{m}_{0} (\mathbb{C})$ is the space of compactly supported functions on $\mathbb{C}$, continuously differentiable of order $ m$, where $0 \leq m \leq\infty$.
An operator $T \in B(H)$  is said to be  scalar of order $m$  if if there is a continuous unital morphism of topological algebras
$$\Phi: C^{m}_{0} (\mathbb{C})\rightarrow B(\mathcal{H})$$ such that $\Phi(z) = T$, where $z$ is the identity function on $\mathbb{C}$.

\begin{theorem}
	Let  $T$ be a sub-$n$--normal operator with rich spectrum contained in an angle $< \dfrac{2\pi}{n}$ with vertex at the origin. \ Then $T$ has nontrivial invariant subspace.
\end{theorem}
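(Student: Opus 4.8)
The plan is to interpret ``rich spectrum'' as thickness (dominating in some open set) and to reduce the statement, via the angular hypothesis, to the known invariant-subspace theory for subscalar operators (equivalently, operators with Bishop's property $(\beta)$) possessing thick spectrum. The starting point is the Corollary above: since $T$ is sub-$n$--normal it is $n$--subnormal, so $T^n$ is subnormal. A subnormal operator is subscalar (a normal operator is generalized scalar of order $0$, and a restriction of a generalized scalar operator to an invariant subspace is subscalar), and in particular $T^n$ has property $(\beta)$. So the subnormal power $T^n$ is already in excellent shape; the whole difficulty is to pull this good structure back from $T^n$ to $T$, and this is exactly where the aperture bound enters.

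The role of the angle condition is to manufacture a single-valued analytic $n$--th root. Since $\sigma(T)$ lies in a closed sector $\Gamma$ of aperture strictly less than $2\pi/n$ with vertex at the origin, the map $\phi(z):=z^n$ is injective on $\Gamma$: if $z_1,z_2\in\Gamma$ with $z_1^n=z_2^n$, then $z_1/z_2$ is an $n$--th root of unity whose argument is constrained by the angular width of $\Gamma$, forcing $z_1=z_2$. Thus $\phi$ is a conformal homeomorphism of $\Gamma$ onto a sector $\phi(\Gamma)$ of aperture below $2\pi$, carrying $\sigma(T)$ onto $\sigma(T^n)$ by the spectral mapping theorem. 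Because $\phi(\Gamma)$ does not wind around the origin, there is a single-valued analytic branch $\psi$ of $z\mapsto z^{1/n}$ defined near $\phi(\Gamma)$ (away from $0$) with $\psi\circ\phi=\mathrm{id}$ on $\Gamma$.

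First I would use $\psi$ to transfer structure from $T^n$ to $T$. Via the holomorphic functional calculus one obtains $T=\psi(T^n)$, because $\psi\circ\phi$ is the identity on a neighborhood of $\sigma(T)$. Since $\psi$ is an injective analytic map, subscalarity and property $(\beta)$ survive the passage to $\psi(T^n)=T$, so $T$ is itself subscalar and enjoys $(\beta)$. Now the hypothesis that $\sigma(T)$ is rich is used directly: the Eschmeier--Prunaru theorem (an operator with property $(\beta)$ and dominating, i.e.\ thick, spectrum has a nontrivial -- in fact a rich lattice of -- invariant subspaces) applies to $T$ and yields the desired invariant subspace. Note that with this route the angular bound is responsible for the subscalarity of $T$ while richness is responsible for invoking the invariant-subspace theorem, so both hypotheses are genuinely used.

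The main obstacle is the behaviour of the root at the vertex, that is, the case $0\in\sigma(T^n)$. The branch $\psi$ has a branch point at $0$ and fails to be analytic on any full neighborhood of the origin, so the identity $T=\psi(T^n)$ and, more importantly, the claim that property $(\beta)$ and subscalarity descend through this root must be justified with care precisely there. I would handle this in one of two ways: either treat first the generic case $0\notin\sigma(T)$, where $\psi$ is genuinely holomorphic on a neighborhood of $\sigma(T^n)$ and the transfer is routine, and then recover the boundary case $0\in\sigma(T)$ by a translation or limiting argument; or approximate $\psi$ uniformly on $\sigma(T^n)\subseteq\phi(\Gamma)$ by functions analytic across the sector -- using that $\phi(\Gamma)$, having aperture below $2\pi$, is a set on which the sectorial root is a uniform limit of polynomials in a suitable variable -- and then pass the invariance of the subspaces to the limit. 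Verifying that subscalarity really survives this possibly singular root at $0$ is the delicate technical heart of the argument; the conformality of $\phi$ off the vertex together with the aperture bound $<2\pi/n$ are exactly the ingredients that keep it under control.
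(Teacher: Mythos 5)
Your high-level strategy coincides with the paper's: show that $T$ itself is subscalar (or has Bishop's property $(\beta)$), then apply the invariant-subspace theorem for such operators with rich (thick) spectrum. The paper does the middle step by quoting \cite{duggal2}: if $T^n$ is ($p$--)hyponormal and $\sigma(T)$ lies in an angle $<2\pi/n$ with vertex at the origin, then $T$ is subscalar; the conclusion then follows from \cite{Eschmeier}. You instead try to prove that transfer step by hand, and this is where there is a genuine gap. Your mechanism is the Riesz--Dunford identity $T=\psi(T^n)$, where $\psi$ is a branch of the $n$--th root on the sector $\phi(\Gamma)$; this requires $\psi$ to be analytic on a full neighborhood of $\sigma(T^n)$, so it is only available when $0\notin\sigma(T)$. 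But the hypothesis places the vertex of the angle at the origin precisely because the spectrum is allowed to contain $0$, and nothing in the statement excludes that case. So your argument, as written, proves the theorem only in the special case $0\notin\sigma(T)$, and you yourself flag the remaining case as the ``delicate technical heart'' without resolving it.

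Neither of your proposed patches closes that case. Translation destroys the hypothesis you need: $(T-\lambda)^n\neq T^n-\lambda^n$, so after translating there is no subnormal (or hyponormal) power from which to transfer anything. Uniform approximation of $\psi$ on $\sigma(T^n)$ by functions $f_k$ analytic across the sector produces operators $f_k(T^n)$ that are not $T$; invariant subspaces of $f_k(T^n)$ are tied to $T^n$, not to $T$, and neither subscalarity nor invariance of specific subspaces passes through such limits in any standard way. Handling the branch point at the vertex is exactly the content of the theorem of Duggal--Jeon--Ko that the paper cites, so omitting it omits the essential difficulty. The good news is that your $(\beta)$--route can be repaired without ever inverting $z^n$: Bishop's property $(\beta)$ transfers between $T$ and $f(T)$ in \emph{both} directions when $f$ is analytic and non-constant on each component of a neighborhood of $\sigma(T)$ (Laursen--Neumann, \emph{An Introduction to Local Spectral Theory}, Theorem 3.3.9). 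Taking $f(z)=z^n$, one gets: $T^n$ hyponormal $\Rightarrow$ $T^n$ subscalar (Putinar) $\Rightarrow$ $T^n$ has $(\beta)$ $\Rightarrow$ $T$ has $(\beta)$, and the Eschmeier--Prunaru theorem (property $(\beta)$ plus thick spectrum) yields the nontrivial invariant subspace. Note that this corrected route does not use the angle condition at all; that hypothesis is what the paper's route needs in order to obtain the stronger conclusion that $T$ is subscalar of finite order.
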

\begin{proof}
	Since $T$ is sub-$n$--normal, $T^{n}$ is hyponormal. \ Then by \cite{duggal2}, $T$ is subscalar. \ The required result follows from \cite{Eschmeier}.
\end{proof}

We conclude this section with a Bram-Embry-type structural result for sub-$n$--normal operators (cf. \cite{Book1}). \ (In the proof of the following result, we will denote by $\mathscr{B}(\mathbb{C})$ and $\mathscr{B}(\mathbb{R}_+)$ the $\sigma$--algebra of Borel subsets of $\mathbb{C}$ and $\mathbb{R}_+$, respectively.) 

\begin{theorem}\label{BE}
	If $T$ is sub-$n$--normal, then there is a positive operator valued measure $F_+$ on some interval $[0,a]$  in $\mathbb{R}$ such that
		$$
		T^{*ni}T^{ni}=\int t^{2ni}\, dF_+(t) \quad \; (i \in \mathbb{Z}_{+}).
		$$
\end{theorem}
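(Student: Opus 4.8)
The plan is to reduce the statement to the classical Bram--Embry representation for the subnormal operator $A := T^n$, which we already know to be subnormal. Since $T$ is sub-$n$--normal, by definition there is an $n$--normal operator $S$ on some $\mathcal{K} \supseteq \mathcal{H}$ with $S\mathcal{H} \subseteq \mathcal{H}$ and $T = S|_{\mathcal{H}}$. Setting $N := S^n$, the operator $N$ is normal and $\mathcal{H}$ is invariant for $N$ (because it is invariant for $S$), so $A = T^n = S^n|_{\mathcal{H}} = N|_{\mathcal{H}}$ is the restriction of a normal operator to an invariant subspace; that is, $A$ is subnormal with normal extension $N$. By the spectral theorem I would then write $N = \int_{\mathbb{C}} z \, dE(z)$, where $E$ is the projection-valued measure of $N$ on $\mathscr{B}(\mathbb{C})$, so that $N^{*i}N^i = \int_{\mathbb{C}} |z|^{2i}\, dE(z)$ for every $i \in \mathbb{Z}_+$.

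Next I would transport the spectral measure to the positive real axis in the way that produces the required exponent $2ni$ rather than $2i$. The device is to introduce the map $\varphi:\mathbb{C} \to \mathbb{R}_+$ given by $\varphi(z) := |z|^{1/n}$ and to push $E$ forward to a projection-valued measure $\widetilde{E}$ on $\mathscr{B}(\mathbb{R}_+)$ by $\widetilde{E}(\Delta) := E(\varphi^{-1}(\Delta))$. Since $|z| = \varphi(z)^n$, the change of variables gives $N^{*i}N^i = \int_{\mathbb{R}_+} t^{2ni}\, d\widetilde{E}(t)$, and because $N$ is normal its spectral radius equals $\|N\| = \|S^n\|$, so $|z| \le \|S^n\|$ on $\sigma(N)$ and hence $\widetilde{E}$ is supported in $[0,a]$ with $a := \|S^n\|^{1/n}$. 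I would then define the candidate measure by compression to $\mathcal{H}$, namely $F_+(\Delta) := P\,\widetilde{E}(\Delta)\big|_{\mathcal{H}}$, where $P$ is the orthogonal projection of $\mathcal{K}$ onto $\mathcal{H}$. A short check shows that the compression of a projection-valued measure is a normalized positive operator-valued measure on $[0,a]$: indeed $\langle F_+(\Delta)x,x\rangle = \langle \widetilde{E}(\Delta)x,x\rangle = \|\widetilde{E}(\Delta)x\|^2 \ge 0$ for $x \in \mathcal{H}$, and $F_+([0,a]) = I_{\mathcal{H}}$.

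Finally I would verify the asserted identity on quadratic forms and then upgrade it to operators. For $x \in \mathcal{H}$, invariance yields $T^{ni}x = N^i x$, whence $\langle T^{*ni}T^{ni}x,x\rangle = \|T^{ni}x\|^2 = \|N^i x\|^2 = \langle N^{*i}N^i x, x\rangle = \int_{[0,a]} t^{2ni}\, d\langle \widetilde{E}(t)x,x\rangle$; and since $Px = x$ and $P = P^*$, one has $\langle \widetilde{E}(\Delta)x,x\rangle = \langle P\widetilde{E}(\Delta)x,x\rangle = \langle F_+(\Delta)x,x\rangle$, so that $\langle T^{*ni}T^{ni}x,x\rangle = \int_{[0,a]} t^{2ni}\, d\langle F_+(t)x,x\rangle$ for all $x \in \mathcal{H}$. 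Polarization then promotes this equality of quadratic forms to the operator identity $T^{*ni}T^{ni} = \int t^{2ni}\, dF_+(t)$. The essential idea lies entirely in the opening reduction---recognizing $T^n$ as subnormal and producing its normal extension $N = S^n$ together with the substitution $t = |z|^{1/n}$ that calibrates the exponent---and I expect the only delicate point to be the measure-theoretic bookkeeping of the push-forward and compression (countable additivity of $\widetilde{E}$ and $F_+$, and the justification of the change of variables), which is routine rather than an operator-theoretic obstacle.
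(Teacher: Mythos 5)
Your proof is correct and follows essentially the same route as the paper's: take the $n$--normal extension $S$, compress the spectral measure of the normal operator $S^{n}$ to $\mathcal{H}$, and push it forward to the positive half-line to obtain $F_{+}$. In fact your substitution $t=|z|^{1/n}$ (with $a=\|S^{n}\|^{1/n}$) is the choice that makes the exponent $t^{2ni}$ come out consistently, whereas the paper's printed version (which speaks of the ``spectral measure of $S$'' and uses $h(z)=|z|^{2n}$) has mismatched exponents; your bookkeeping is the careful version of the same argument.
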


\begin{proof}
Since $T$ is sub-$n$--normal, it has an $n$--normal extension $S$ on $\mathcal{K}$. \ Let $E: \mathscr{B}(\mathbb{C}) \rightarrow  B(\mathcal{K})$  be the spectral measure of $S$. \ Let $P$ be the orthogonal projection of $\mathcal{K}$ onto $\mathcal{H}$. \ Then  $F(\Delta):=PE(\Delta)|_{\mathcal{H}}$ \; ($\Delta \in \mathscr{B}(\mathbb{C})$) is a positive operator-valued measure such that $F(\{z\in \mathbb{C}: |z| \geq ||S^{n}||\})=0$. \ It follows that 
$$
T^{*ni}T^{ni}=\int |z|^{2ni}dF(z).
$$
Let $F_{+}: \mathscr{B}(\mathbb{R_{+}}) \rightarrow B(\mathcal{H})$ be defined by 
	$F_{+}(\Delta):=F(h^{-1}(\Delta))$, where $h: \mathbb{C} \rightarrow \mathbb{R_{+}}$ is given by $h(z)=|z|^{2n}$ \; ($z\in \mathbb{C}$). \ Thus  $F_{+}$  is a positive operator valued measure  on the interval $[0,a] \subseteq \mathbb{R}$, where $a:=max\{|z|^{2n}; z \in supp(F)\}$. \ As a result, we obtain   
	$$
	T^{*ni}T^{ni}= \int t^{2ni}dF_+(t),
		$$
as desired.
\end{proof}

\bigskip
\section{The Class of $n$--quasinormal Operators}

\begin{definition}
	An operator $T \in B(H)$ is said to be quasi-$n$--normal if $T$ commutes with $T^{*n}T^{n}$.
\end{definition}

\begin{definition}
	An operator $T \in B(H)$ is said to be $n$--quasinormal if $T^n$ is quasinormal, i.e., $T^n$ commutes with $T^{*n}T^{n}$.
\end{definition}

By a simple calculation, it is evident that if $T$ is  quasi-$n$--normal, then $T$ is $n$--quasinormal. \ The weighted shift $W_\alpha$  with weights $\{a, b, 1,1,1, \ldots \}$, where $0<a,b<1$, is $2$--subnormal (see Example \ref{ee}) but not quasi-$2$--normal (also, not $2$--quasinormal). \ Since $n$--quasinormal operators are $n$--subnormal, we have the following inclusion: \newline
\medskip
$$
\textrm{quasi-$n$--normal } \subseteq \textrm{ $n$--quasinormal } \subseteq \textrm{ $n$--subnormal}.
$$
We now classify the $2$--quasinormal and $3$--quasinormal unilateral weighted shifts. 

\begin{lemma} \label{lem223}
Let $W_\alpha$ be a $2$--quasinormal unilateral weighted shift. \ Then the weight sequence $\alpha$ is periodic with period at most $2$.
\end{lemma}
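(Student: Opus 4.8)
The plan is to translate the $2$--quasinormality condition for $W_\alpha$ directly into relations on the weights. Recall that $T := W_\alpha^2$ is quasinormal, meaning $T$ commutes with $T^*T$; equivalently $T^*T$ commutes with $T$. I would first compute the action of $T=W_\alpha^2$ on basis vectors: $W_\alpha^2 e_j = \alpha_j \alpha_{j+1} e_{j+2}$, so $T$ is (unitarily equivalent to) a direct sum of two weighted shifts, one on the even-indexed basis vectors and one on the odd-indexed ones, with weight products $\beta_j := \alpha_j \alpha_{j+1}$. A weighted shift $S$ with weights $\{\beta_j\}$ is quasinormal precisely when all its weights are equal (this is the standard fact recalled in the introduction that a weighted shift is quasinormal iff it is a scalar multiple of the unilateral shift). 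Applying this to each of the two summands, quasinormality of $W_\alpha^2$ forces $\beta_0=\beta_2=\beta_4=\cdots$ and $\beta_1=\beta_3=\beta_5=\cdots$, i.e. the sequence $\{\alpha_j\alpha_{j+1}\}$ is itself $2$--periodic.

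Next I would extract periodicity of $\alpha$ itself from periodicity of the products $\beta_j=\alpha_j\alpha_{j+1}$. From $\beta_j=\beta_{j+2}$ for all $j$ we get $\alpha_j\alpha_{j+1}=\alpha_{j+2}\alpha_{j+3}$. The cleanest route is to look at consecutive even gaps: $\beta_{j+1}/\beta_j = \alpha_{j+2}/\alpha_j$, and since $\beta_{j+1}=\beta_{j+3}$ and $\beta_j=\beta_{j+2}$, the ratio $\alpha_{j+2}/\alpha_j$ is the same as $\alpha_{j+4}/\alpha_{j+2}$, giving a telescoping relation among the even-indexed (and separately odd-indexed) weights. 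I would then combine this with the two-periodicity of the products to conclude that $\alpha_{j+2}=\alpha_j$ for all $j$; that is, $\alpha$ is periodic with period (at most) $2$. Concretely, writing out the relations $\alpha_0\alpha_1=\alpha_2\alpha_3=\cdots$ and $\alpha_1\alpha_2=\alpha_3\alpha_4=\cdots$ and dividing adjacent identities should let me solve for each $\alpha_{j+2}$ in terms of $\alpha_j$ and reach $\alpha_{j+2}=\alpha_j$.

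The main obstacle I anticipate is purely algebraic rather than conceptual: passing from equality of the \emph{products} $\alpha_j\alpha_{j+1}$ to equality of the individual weights $\alpha_j$. The product relations alone do not immediately separate the variables, so I must use \emph{both} families of equalities (the even-offset and odd-offset product identities) simultaneously. The key algebraic identity is that from $\alpha_j\alpha_{j+1}=\alpha_{j+2}\alpha_{j+3}$ and $\alpha_{j+1}\alpha_{j+2}=\alpha_{j+3}\alpha_{j+4}$ one obtains, by division, $\alpha_j/\alpha_{j+2}=\alpha_{j+3}/\alpha_{j+1}$ and $\alpha_{j+1}/\alpha_{j+3}=\alpha_{j+4}/\alpha_{j+2}$; chaining these forces the two-periodicity. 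Since all $\alpha_j>0$ these divisions are legitimate, which is why I would emphasize positivity of the weights throughout. Once the algebra closes, the conclusion that $\alpha$ has period at most $2$ is immediate.
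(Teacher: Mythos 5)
Your first step coincides with the paper's: $W_\alpha^2$ is unitarily equivalent to the direct sum of the two weighted shifts with weight sequences $(\alpha_0\alpha_1,\alpha_2\alpha_3,\dots)$ and $(\alpha_1\alpha_2,\alpha_3\alpha_4,\dots)$, and since a quasinormal weighted shift must have constant weights, quasinormality of $W_\alpha^2$ yields $\alpha_0\alpha_1=\alpha_2\alpha_3=\cdots$ and $\alpha_1\alpha_2=\alpha_3\alpha_4=\cdots$. The gap is in your second half: these product relations do \emph{not} algebraically force $\alpha_{j+2}=\alpha_j$, so the claim that ``chaining these forces the two-periodicity'' fails. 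Indeed, your own telescoping identity shows exactly what the relations give: $\alpha_{j+4}/\alpha_{j+2}=\alpha_{j+2}/\alpha_j$, so the even-indexed weights form a geometric progression with some ratio $c:=\alpha_2/\alpha_0>0$ and the odd-indexed weights form a geometric progression with ratio $1/c$; nothing in the algebra pins down $c=1$. Concretely, the sequence $(1,\,1,\,2,\,\tfrac12,\,4,\,\tfrac14,\,8,\,\tfrac18,\dots)$ consists of positive numbers, satisfies \emph{all} of your identities (the even products all equal $1$, the odd products all equal $2$, and both division identities hold), yet it is not periodic. Since every relation you derive is a consequence of the product identities, no amount of further division can rule this solution out.

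What rescues the lemma --- and what the paper's proof uses --- is boundedness: $\alpha$ is the weight sequence of the bounded operator $W_\alpha$, so $\sup_j\alpha_j=\|W_\alpha\|<\infty$. Writing $\alpha_0=1$, $\alpha_1=r$, $\alpha_2=s$, the general positive solution of the product relations is $(1,\,r,\,s,\,r/s,\,s^2,\,r/s^2,\,s^3,\,r/s^3,\dots)$; boundedness of $\{s^k\}$ forces $s\le 1$, boundedness of $\{r/s^k\}$ forces $s\ge 1$, hence $s=1$ and the sequence is $(1,r,1,r,\dots)$, periodic with period at most $2$. In your notation: if $c>1$ the even-indexed weights $c^k$ are unbounded, if $c<1$ the odd-indexed weights $r\,c^{-k}$ are unbounded, so $c=1$. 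Adding this one boundedness step closes your argument and makes it agree with the paper's proof; without it, the conclusion simply does not follow.
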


\begin{proof} \ Without loss of generality, we can assume that $\alpha_0=1$. \ Let $r:=\alpha_1, \quad s:=\alpha_2$. \ Since 
$$
W_\alpha^2 \cong \operatorname{shift}(\alpha_0 \alpha_1, \alpha_2 \alpha_3, \ldots) \oplus \operatorname{shift}(\alpha_1 \alpha_2, \alpha_3 \alpha_4, \ldots)
$$
and $W_\alpha^2$ is quasinormal, we must have $\alpha_0 \alpha_1=\alpha_2 \alpha_3=\ldots$, and $\alpha_1 \alpha_2=\alpha_3 \alpha_4=\ldots$. \ This means: 
$$
\alpha_3=\frac{\alpha_0 \alpha_1}{\alpha_2}, \quad \textrm{ and } \quad \alpha_4=\frac{\alpha_1 \alpha_2}{\alpha_3}, \ldots.
$$
We then have:
$$
W_\alpha=\left(1, r, s, \frac{r}{s}, s^2, \frac{r}{s^2}, s^3, \frac{r}{s^3}, \ldots\right)
$$
Since $W_\alpha$ is bounded, we need $\left\{s^n\right\}$ bounded, i.e., $s \leqslant 1$; on the other hand, we also need $\left\{\frac{1}{s^n}\right\}$ bounded, so $s \geqslant 1$. \ It follows that $s=1$. \ Thus, $W_\alpha=(1, r, 1, r, 1, r, \ldots)$, and $\alpha$ is periodic with period at most $2$.
\end{proof}

\begin{lemma} \label{lem224}
Let $W_\alpha$ be a $3$--quasinormal unilateral weighted shift. \ Then the weight sequence $\alpha$ is periodic with period at most $3$.
\end{lemma}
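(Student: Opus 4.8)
The plan is to reduce the problem, exactly as in Lemma \ref{lem223}, to a set of multiplicative constraints on the weights coming from quasinormality of $W_\alpha^3$, and then to exploit a ``drift sums to zero'' phenomenon to force periodicity. First I would recall the standard power decomposition
$$
W_\alpha^3 \cong \bigoplus_{r=0}^{2} \operatorname{shift}(\alpha_r \alpha_{r+1}\alpha_{r+2}, \; \alpha_{r+3}\alpha_{r+4}\alpha_{r+5}, \; \alpha_{r+6}\alpha_{r+7}\alpha_{r+8}, \ldots).
$$
Since a direct sum of operators is quasinormal if and only if each summand is quasinormal, and since a weighted shift is quasinormal precisely when it is a scalar multiple of $U_+$ (i.e. has constant weights), quasinormality of $W_\alpha^3$ is equivalent to the assertion that each of the three product-weight sequences is constant. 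Equivalently, the product $\pi_j := \alpha_j \alpha_{j+1}\alpha_{j+2}$ satisfies $\pi_j = \pi_{j+3}$ for every $j$.

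Next I would pass to logarithms. Writing $a_j := \log \alpha_j$ and $S_j := a_j + a_{j+1} + a_{j+2}$, the constraint becomes simply that $S_j$ is $3$--periodic, say $S_{3k}=P$, $S_{3k+1}=Q$, $S_{3k+2}=W$. Subtracting consecutive sums gives $a_{j+3}-a_j = S_{j+1}-S_j$, which depends only on $j \bmod 3$ and takes the three values $Q-P$, $W-Q$, $P-W$. Thus each residue class $\{a_{r+3k}\}_{k \ge 0}$ is an arithmetic progression, with common differences $Q-P$, $W-Q$, $P-W$ respectively; crucially, these three common differences sum to $0$.

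The decisive step — and the main point of the argument — is the boundedness input. Since $W_\alpha$ is a bounded operator, the sequence $\{\alpha_j\}$ is bounded above, so each $\{a_{r+3k}\}_{k}$ is bounded above; but an arithmetic progression is bounded above if and only if its common difference is $\le 0$. Hence $Q-P \le 0$, $W-Q \le 0$, and $P-W \le 0$. Since these three quantities sum to $0$, each must equal $0$, forcing $P=Q=W$ and every common difference to vanish. Therefore each residue class is constant, i.e. $\alpha_{j+3}=\alpha_j$ for all $j$, and $\alpha$ is periodic with period at most $3$.

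I expect the only subtle point to be this boundedness step: one cannot separately invoke a lower bound on the weights (they may well tend to $0$), so the conclusion that all three drifts vanish must come from the algebraic fact that they sum to zero together with the one-sided bound from above — this is precisely the analog of the $s \le 1$ and $s \ge 1$ squeeze used in Lemma \ref{lem223}. Everything else is routine bookkeeping, and one may, if desired, normalize $\alpha_0=1$ and write out the weights explicitly (as was done for $n=2$) to display the same conclusion in multiplicative form.
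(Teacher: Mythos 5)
Your proof is correct, and its first half coincides with the paper's: the same three-fold decomposition of $W_\alpha^3$, and the same reduction (a quasinormal weighted shift has constant weights) to the statement that each product sequence $\alpha_r\alpha_{r+1}\alpha_{r+2},\ \alpha_{r+3}\alpha_{r+4}\alpha_{r+5},\ldots$ is constant for $r=0,1,2$. The finish is where you genuinely differ. The paper normalizes $\alpha_0=1$, solves the recursion explicitly in the unknowns $r=\alpha_1$, $s=\alpha_2$, $t=\alpha_3$, $u=\alpha_4$, observes that the three residue classes of the weight sequence grow geometrically with ratios $t$, $u/r$ and $r/(tu)$, and uses boundedness to get $t\le 1$, $u\le r$, $r\le tu$, whence $r\le tu\le u\le r$ forces $t=1$, $u=r$. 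Your logarithmic version is the same squeeze in additive form --- your drifts $Q-P$, $W-Q$, $P-W$ are exactly $\log t$, $\log(u/r)$, $\log(r/(tu))$ --- but packaging it as ``three arithmetic progressions, each bounded above, whose common differences sum to zero'' buys something real: no explicit enumeration of the weight sequence is needed, and the argument runs verbatim for every $n$ ($n$ residue classes, $n$ drifts with zero sum, each $\le 0$ by boundedness from above, hence all zero). That matters because the paper proves only the cases $n=2,3$ by hand (Lemmas \ref{lem223} and \ref{lem224}) and asserts the general theorem only ``with the aid of the software tool \emph{Mathematica}''; your argument is a clean, computation-free proof of that general statement. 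You are also right to flag that only the upper bound on the weights is usable (the weights may well accumulate at $0$), which is precisely why the zero-sum identity among the drifts is indispensable.
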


\begin{proof} \ Without loss of generality, we can assume that $\alpha_0=1$. \ Let $r:=\alpha_1, s:=\alpha_2, \quad t:=\alpha_3, u:=\alpha_4$. \ Since 
$$
W_\alpha^3 \cong \operatorname{shift}(\alpha_0 \alpha_1 \alpha_2, \ldots) \oplus \operatorname{shift}(\alpha_1 \alpha_2 \alpha_3, \ldots) \oplus \operatorname{shift}(\alpha_2 \alpha_3 \alpha_4, \ldots)
$$
and $W_\alpha^3$ is quasinormal, we must have 
$$
\alpha_0 \alpha_1 \alpha_2=\alpha_3 \alpha_4 \alpha_5=\ldots,
$$
$$
\alpha_1 \alpha_2 \alpha_3=\alpha_4 \alpha_5 \alpha_6=\ldots,
$$
and
$$
\alpha_2 \alpha_3 \alpha_4=\alpha_5 \alpha_6 \alpha_7=\ldots.
$$
This means: 
$$
\alpha_5=\frac{r s}{t u}, \quad \alpha_6=\frac{r s t}{u \cdot \alpha_5}, \ldots.
$$
We then have
$$
W_\alpha=\left(1, r, s, t, u, \frac{r s}{t u}, t^2, \frac{u^2}{r}, \frac{r^2 s}{t^2 u^2}, t^3, \frac{u^3}{r^2}, \frac{r^3 s}{t^3 u^3}, \ldots\right).
$$
As before, we need:
$$
\left\{\begin{array} { l } 
{ t \leqslant 1 } \\
{ u \leq r } \\
{ r \leq t u }
\end{array} \Rightarrow \left\{\begin{array}{l}
\frac{r}{u} \leq t \leq 1 \\
\frac{u}{r} \leqslant 1
\end{array} \Rightarrow u=r \Rightarrow t \geqslant 1 .\right.\right.
$$
We conclude that $t=1$ and $u=r_1$. \ It follows that 
$$
W_\alpha=(1, r, s, 1, r, s, 1, r, s, \ldots),
$$
so that $\alpha$ is periodic with period at most $3$.
\end{proof}

Using the technique in Lemmas \ref{lem223} and \ref{lem224}, and with the aid of the software tool {\it Mathematica}, we establish the following result.

\begin{theorem}
Let $W_\alpha$ be an $n$--quasinormal unilateral weighted shift. \ Then the weight sequence $\alpha$ is periodic with period at most $n$.
\end{theorem}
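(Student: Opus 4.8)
The plan is to recast the explicit computations of Lemmas \ref{lem223} and \ref{lem224} in a way that makes the general case follow without writing out the weights. First I would invoke the standard decomposition of a power of a weighted shift. Setting $p_k := \alpha_k \alpha_{k+1}\cdots\alpha_{k+n-1}$ for the product of $n$ consecutive weights, one has
$$
W_\alpha^n \cong \bigoplus_{r=0}^{n-1} \operatorname{shift}(p_r, p_{r+n}, p_{r+2n}, \ldots),
$$
where the $r$-th summand acts on $\overline{\operatorname{span}}\{e_r, e_{r+n}, e_{r+2n}, \ldots\}$. Since a unilateral weighted shift is quasinormal exactly when it is a scalar multiple of the unweighted shift, i.e.\ when its weights are constant, the hypothesis that $W_\alpha^n$ is quasinormal is equivalent to requiring each summand to have constant weights, namely
$$
p_k = p_{k+n} \qquad (k \ge 0).
$$

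Next I would pass to the ratios $c_k := \alpha_{k+n}/\alpha_k$. Because $p_{k+1}/p_k = \alpha_{k+n}/\alpha_k = c_k$, telescoping gives $p_{k+n}/p_k = \prod_{m=k}^{k+n-1} c_m$, so the condition $p_{k+n}=p_k$ becomes $\prod_{m=k}^{k+n-1} c_m = 1$ for every $k \ge 0$. Dividing the constraint at $k+1$ by the one at $k$ yields $c_{k+n}=c_k$, so the ratio sequence $\{c_k\}$ is periodic with period $n$; the case $k=0$ gives in addition $\prod_{r=0}^{n-1} c_r = 1$. Consequently, within each residue class modulo $n$ the weights form a geometric progression: iterating $\alpha_{k+n}=c_k\alpha_k$ and using $c_{r+qn}=c_r$ gives $\alpha_{r+qn} = c_r^{\,q}\,\alpha_r$ for $0 \le r < n$ and $q \ge 0$.

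The decisive step is the boundedness of $W_\alpha$, which forces $\sup_k \alpha_k < \infty$. For each fixed $r$ the subsequence $\{c_r^{\,q}\alpha_r\}_{q\ge 0}$ stays bounded only if $c_r \le 1$. Thus every $c_r$ is a positive real number at most $1$, while their product equals $1$, and this forces $c_r = 1$ for all $r$. Hence $\alpha_{r+n}=\alpha_r$ for every $r$, i.e.\ $\alpha$ is periodic with period at most $n$, as claimed. I do not expect a serious obstacle: the only real content beyond bookkeeping is the observation that the per-class upper bounds $c_r \le 1$, together with the single multiplicative constraint $\prod_{r=0}^{n-1} c_r = 1$, collapse all ratios to $1$. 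This is precisely what drives the explicit chains of inequalities in the $n=2$ and $n=3$ cases, now made uniform in $n$ and free of the Mathematica-assisted bookkeeping.
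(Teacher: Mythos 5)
Your proof is correct, and it is complete where the paper's is not. You start exactly where the paper starts: the decomposition $W_\alpha^n \cong \bigoplus_{r=0}^{n-1}\operatorname{shift}(p_r,p_{r+n},p_{r+2n},\ldots)$ with $p_k=\alpha_k\cdots\alpha_{k+n-1}$, together with the fact that a positive-weight unilateral weighted shift is quasinormal precisely when its weights are constant (and that a direct sum is quasinormal iff each summand is), so that the hypothesis becomes $p_k=p_{k+n}$ for all $k\ge 0$. The difference is in how this system is solved. In Lemmas \ref{lem223} and \ref{lem224} the paper solves it by explicit substitution, writing all later weights in terms of the first few and invoking boundedness on the resulting explicit sequences (e.g.\ $\{s^m\}$ and $\{s^{-m}\}$ both bounded forces $s=1$); for general $n$ the paper only asserts that the same technique works ``with the aid of Mathematica,'' so no actual proof of the theorem appears in the text. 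Your ratio bookkeeping replaces that computation with an argument uniform in $n$: from $p_{k+1}/p_k=c_k:=\alpha_{k+n}/\alpha_k$ the constraints become $\prod_{m=k}^{k+n-1}c_m=1$ for all $k$; dividing consecutive constraints gives $c_{k+n}=c_k$, hence $\alpha_{r+qn}=c_r^{\,q}\alpha_r$ on each residue class; boundedness of $\alpha$ forces every $c_r\le 1$; and the single constraint $\prod_{r=0}^{n-1}c_r=1$ then collapses all ratios to $1$, i.e.\ $\alpha_{k+n}=\alpha_k$ for all $k$. Each step checks out (the weights are strictly positive, so the ratios are well defined and the division is legitimate). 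In short, you have abstracted exactly the mechanism that drives the paper's inequality chains in the $n=2,3$ cases and thereby supplied a clean, computer-free proof of the general statement, which is a genuine improvement over what the paper records.
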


 R.E. Curto, S.H. Lee and J. Yoon
asked the following question. \ Let $T$ be a subnormal operator,
and assume that $T^2$ is quasinormal. \ Does it follow that
$T$ is quasinormal? \ In \cite{CU1}, they proved that this holds when $T$ is injective. \ On the other hand, when $T$ is not necessarily injective, an affirmative answer to this question has been given by P. Pietrzycki and J. Stochel in \cite{JS}. \ We now study a sub-$n$--normal version of Pietrzycki and Stochel's result \cite{JS}, using similar arguments. \ First, we need a lemma.

\begin{lemma}\label{l2}
An operator	$T$ is $n$--quasinormal if and only if $T^{*nk}T^{nk}=(T^{*n}T^{n})^k$ for $k=0,1,2...$
\end{lemma}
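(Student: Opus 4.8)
The plan is to reduce the statement to the classical characterization of quasinormality. Writing $Q:=T^n$ and $P:=Q^*Q=T^{*n}T^n$, the operator $T$ is $n$--quasinormal precisely when $Q$ is quasinormal, i.e.\ $QP=PQ$, and the asserted identities become $Q^{*k}Q^k=P^k$ $(k=0,1,2,\dots)$, since $Q^{*k}Q^k=T^{*nk}T^{nk}$ and $P^k=(T^{*n}T^n)^k$. Thus it suffices to prove the following intrinsic fact: a bounded operator $Q$ is quasinormal if and only if $Q^{*k}Q^k=(Q^*Q)^k$ for every $k\ge 0$.

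For the forward implication I would argue by induction on $k$. The cases $k=0,1$ are immediate. Assuming $Q^{*k}Q^k=P^k$, I compute $Q^{*(k+1)}Q^{k+1}=Q^*(Q^{*k}Q^k)Q=Q^*P^kQ$; quasinormality gives $QP=PQ$ and hence $QP^k=P^kQ$, so $Q^*P^kQ=Q^*QP^k=P^{k+1}$, completing the induction.

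The reverse implication is the crux. Assuming $Q^{*k}Q^k=P^k$ for all $k$, I first extract the family of relations
$$Q^*P^kQ=Q^*(Q^{*k}Q^k)Q=Q^{*(k+1)}Q^{k+1}=P^{k+1}\qquad(k\ge 0),$$
obtained by invoking the hypothesis for the exponents $k$ and $k+1$ simultaneously; in particular $Q^*Q=P$, $Q^*PQ=P^2$, and $Q^*P^2Q=P^3$. I would then establish $QP=PQ$ by showing $\|(QP-PQ)x\|^2=0$ for every $x$: expanding this norm produces the four scalars $\|QPx\|^2$, $-\langle QPx,PQx\rangle$, $-\langle PQx,QPx\rangle$, $\|PQx\|^2$, and after moving the self-adjoint factor $P$ across each inner product and applying the derived identities $Q^*PQ=P^2$ and $Q^*P^2Q=P^3$, every one of the four terms collapses to $\langle P^3x,x\rangle$, so the signs $(+,-,-,+)$ cancel. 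The only delicate point is the bookkeeping in these four expansions (keeping track of $P=P^*$ and applying the correct derived identity to each term), after which $QP=PQ$ yields quasinormality of $Q$; translating back via $Q=T^n$ gives the lemma. (An alternative, essentially equivalent, finish is to promote $Q^*P^kQ=P^{k+1}$ to $Q^*f(P)Q=f(P)P$ for all bounded Borel $f$ by functional calculus, apply it to the spectral projections $E(\Delta)$ of $P$, and conclude that $Q$ commutes with each $E(\Delta)$ and hence with $P$.)
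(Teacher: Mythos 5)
Your proof is correct, and it is worth pointing out that the paper itself gives \emph{no} proof of this lemma: it is stated bare, implicitly as a known fact (it is the characterization of quasinormality via the identities $Q^{*k}Q^{k}=(Q^{*}Q)^{k}$, which is used in the cited work of Pietrzycki and Stochel \cite{JS} and goes back to Jab\l{}o\'nski, Jung and Stochel). So your contribution is precisely to supply a self-contained argument, and it is sound. The reduction to $Q:=T^{n}$, $P:=Q^{*}Q$ matches the paper's definition of $n$--quasinormality ($T^{n}$ commutes with $T^{*n}T^{n}$), the forward induction is routine, and the key expansion checks out: with $Q^{*}Q=P$, $Q^{*}PQ=P^{2}$ and $Q^{*}P^{2}Q=P^{3}$ one gets
\begin{equation*}
\|(QP-PQ)x\|^{2}=\langle P^{3}x,x\rangle-\langle P^{3}x,x\rangle-\langle P^{3}x,x\rangle+\langle P^{3}x,x\rangle=0 ,
\end{equation*}
hence $QP=PQ$. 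A noteworthy feature of your argument is that the reverse direction uses the hypothesis only for $k\le 3$, so you in fact prove the sharper statement (known in the literature) that the equalities for $k=2,3$ alone force quasinormality; the lemma as stated follows a fortiori. One small caveat: your alternative functional-calculus finish is not immediate as written, since the identity $Q^{*}E(\Delta)Q=E(\Delta)P$ does not by itself say that $Q$ commutes with $E(\Delta)$; one needs the extra step that for $x\in\operatorname{ran}E(\Delta)$ one has $\|E(\Delta)Qx\|^{2}=\langle E(\Delta)Px,x\rangle=\langle Px,x\rangle=\|Qx\|^{2}$, so each $\operatorname{ran}E(\Delta)$ is invariant (hence, applying this to complements, reducing) for $Q$. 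Since that is only an aside and your main computation is airtight, the proof stands.
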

\begin{proposition}(\cite{Ha,Mu})\label{p1}
	Let $A\in B(\mathcal{H})$ be a positive operator, $T \in B(\mathcal{H})$ be a contraction
	and $f : [0, \infty) \rightarrow \mathbb{R}$ be a continuous operator monotone function such that $f(0) \geq 0$.
	Then
	$$T^{*}f(A)T \leq f(T^{*}AT).$$
	
	Moreover, if $f$ is not an affine function and $T$ is an orthogonal projection such that
	$T \neq I$, then $T^{*}f(A)T = f(T^{*}AT)$  if and only if $TA = AT$ and $f(0) = 0$.
\end{proposition}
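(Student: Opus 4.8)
The plan is to prove this Hansen-type inequality through Löwner's integral representation of operator monotone functions, reducing everything to an elementary contractivity estimate for the ``building block'' functions $g_\lambda(t):=\lambda t/(\lambda+t)$. I would first record that a continuous operator monotone $f$ on $[0,\infty)$ admits a representation
$$
f(t)=f(0)+\beta t+\int_{(0,\infty)}\frac{\lambda t}{\lambda+t}\,d\mu(\lambda),
$$
with $\beta\ge 0$ and $\mu$ a positive measure, and that $f$ fails to be affine precisely when $\mu\neq 0$. Testing both sides against vectors and using that the compression $X\mapsto T^{*}XT$ commutes with the (weak) integral, the claimed inequality splits into three pieces: the constant term yields $f(0)\,T^{*}T\le f(0)\,I$ (this is exactly where $f(0)\ge 0$ and $\|T\|\le 1$ are used); the linear term $\beta t$ yields equality; and the substance is the kernel inequality $T^{*}g_\lambda(A)T\le g_\lambda(T^{*}AT)$ for each fixed $\lambda>0$.

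For the kernel inequality I would use the variational (parallel-sum) description $g_\lambda(B)=(\lambda I):B$, namely $\langle g_\lambda(B)x,x\rangle=\inf_{u+v=x}\bigl[\lambda\|u\|^{2}+\langle Bv,v\rangle\bigr]$ for $B\ge 0$ (established first for $B>0$, then extended by continuity). Applying this to $B=T^{*}AT$ and to $B=A$ evaluated at $Tx$, and observing that every splitting $u+v=x$ induces the admissible splitting $Tu+Tv=Tx$, contractivity $\|Tu\|\le\|u\|$ gives
$$
\langle T^{*}g_\lambda(A)Tx,x\rangle=\inf_{p+q=Tx}\bigl[\lambda\|p\|^{2}+\langle Aq,q\rangle\bigr]\le\inf_{u+v=x}\bigl[\lambda\|u\|^{2}+\langle A(Tv),Tv\rangle\bigr]=\langle g_\lambda(T^{*}AT)x,x\rangle .
$$
Integrating against $d\mu$ and adding back the constant and linear contributions yields $f(T^{*}AT)-T^{*}f(A)T\ge 0$, since each contribution is nonnegative.

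For the equality statement with $T=P$ a proper orthogonal projection, I would argue as follows. Choosing $x\in\ker P\neq\{0\}$ shows $\langle Pf(A)Px,x\rangle=0$ while $\langle f(PAP)x,x\rangle=f(0)\|x\|^{2}$, which forces $f(0)=0$. With $f(0)=0$ the operator $f(PAP)-Pf(A)P$ equals $\int[g_\lambda(PAP)-Pg_\lambda(A)P]\,d\mu(\lambda)$ with nonnegative integrand, so equality forces equality in the kernel term for $\mu$-a.e. $\lambda$; as $f$ is not affine we have $\mu\neq 0$, so such a $\lambda_{0}>0$ exists. Working in the block decomposition $\mathcal{H}=\operatorname{ran}P\oplus\ker P$ and writing $A=\begin{pmatrix}A_{11}&A_{12}\\A_{12}^{*}&A_{22}\end{pmatrix}$, equality in the $\lambda_{0}$ kernel term reduces (using $g_{\lambda_{0}}(0)=0$) to $[g_{\lambda_{0}}(A)]_{11}=g_{\lambda_{0}}(A_{11})$, i.e. to $R_{11}=(\lambda_{0}I+A_{11})^{-1}$ where $R_{11}$ is the $(1,1)$ block of $(\lambda_{0}I+A)^{-1}$. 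The Schur-complement identity $R_{11}=\bigl[(\lambda_{0}I+A_{11})-A_{12}(\lambda_{0}I+A_{22})^{-1}A_{12}^{*}\bigr]^{-1}$ then forces $A_{12}(\lambda_{0}I+A_{22})^{-1}A_{12}^{*}=0$, and since $(\lambda_{0}I+A_{22})^{-1}>0$ this gives $A_{12}=0$, that is $PA=AP$. The converse is a direct block computation: once $f(0)=0$ and $PA=AP$, both sides equal $f(A_{11})\oplus 0$.

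I expect the main obstacle to be the equality case. One must correctly isolate a single active kernel parameter $\lambda_{0}$ (which is exactly where the non-affineness hypothesis on $f$ is consumed) and then run the Schur-complement computation cleanly, while simultaneously being careful to justify two analytic points used throughout: the interchange of the $d\mu$-integral with the compression $X\mapsto T^{*}XT$, and the passage from $B>0$ to $B\ge 0$ in the variational formula for the parallel sum.
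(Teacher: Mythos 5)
Your proposal is correct, but there is nothing in the paper to compare it against: the paper states this proposition as a quoted result, with the inequality attributed to Hansen \cite{Ha} and the equality characterization to Uchiyama \cite{Mu}, and gives no proof. What you have written is therefore a self-contained proof of a cited theorem, and its architecture matches the classical one: after the L\"owner representation $f(t)=f(0)+\beta t+\int_{(0,\infty)}\frac{\lambda t}{\lambda+t}\,d\mu(\lambda)$, Hansen's inequality reduces to exactly the resolvent-type estimate $T^{*}g_\lambda(A)T\le g_\lambda(T^{*}AT)$, which you obtain via the parallel-sum variational formula (a clean substitute for Hansen's direct algebraic manipulation), while the rigidity argument at a single active parameter $\lambda_0$ together with the Schur-complement identity for the $(1,1)$ corner of $(\lambda_0 I+A)^{-1}$ is the core of Uchiyama's equality analysis. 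Two of the details you flag deserve to be made explicit, and both go through. First, in ``equality forces equality in the kernel term for $\mu$-a.e.\ $\lambda$,'' the exceptional null set a priori depends on the vector $x$; the fix is that for each fixed $x$ the function $\lambda\mapsto\langle[g_\lambda(PAP)-Pg_\lambda(A)P]x,x\rangle$ is continuous and nonnegative with zero $\mu$-integral, hence vanishes on $\operatorname{supp}\mu$, so any $\lambda_0\in\operatorname{supp}\mu$ (nonempty precisely because $f$ is not affine, i.e.\ $\mu\neq 0$) works simultaneously for all $x$. Second, neither the variational formula nor the integral interchange needs an approximation argument: since $\lambda I+B$ is invertible for every $B\ge 0$, the minimizer $v=\lambda(\lambda I+B)^{-1}x$ exists directly and gives $\langle g_\lambda(B)x,x\rangle=\inf_{u+v=x}\bigl[\lambda\|u\|^{2}+\langle Bv,v\rangle\bigr]$; and since $\|g_\lambda(A)\|\le\min(\lambda,\|A\|)$ is $\mu$-integrable, the integral converges in norm (Bochner), so the compression $X\mapsto T^{*}XT$ passes through it trivially. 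With these two points spelled out, your argument is a complete and correct proof of the proposition the paper only cites.
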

\begin{proposition}(\cite{JS})\label{p2}
	If p is a positive real number, then the commutants of a positive operator
	and of its $p$--th power coincide.
\end{proposition}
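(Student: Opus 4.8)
The plan is to reduce the entire statement to a single functional-calculus lemma and then to exploit the symmetry between $A$ and $A^p$. Because $A$ is positive, the spectral theorem provides a spectral measure $E$ supported on the compact set $\sigma(A)\subseteq[0,\|A\|]$, and for any exponent $q>0$ the operator $A^q$ is unambiguously defined as $f_q(A)$, where $f_q(t):=t^q$ is continuous on $[0,\infty)$. The key lemma I would isolate is: \emph{if $A\ge 0$ and $B$ commutes with $A$, then $B$ commutes with $A^q$ for every $q>0$.}

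To prove this lemma, first observe that $BA=AB$ forces $BA^m=A^mB$ for every nonnegative integer $m$, hence $B\,p(A)=p(A)\,B$ for every polynomial $p$. By the Weierstrass approximation theorem, choose polynomials $p_k$ with $\sup_{t\in[0,\|A\|]}|p_k(t)-t^q|\to 0$; since the continuous functional calculus is norm-decreasing (indeed isometric on $C(\sigma(A))$), we get $\|p_k(A)-A^q\|\le\sup_{t\in\sigma(A)}|p_k(t)-t^q|\to 0$. Passing to the limit in the identity $B\,p_k(A)=p_k(A)\,B$ then yields $BA^q=A^qB$, which establishes the lemma.

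With the lemma in hand, both inclusions follow at once. If $BA=AB$, then taking $q=p$ gives $BA^p=A^pB$, so the commutant of $A$ is contained in that of $A^p$. For the reverse inclusion I would set $C:=A^p$, which is again a positive operator, and note that $C^{1/p}=(A^p)^{1/p}=A$, the scalar identity $(t^p)^{1/p}=t$ on $[0,\infty)$ being transported to operators through the functional calculus. Applying the lemma to the positive operator $C$ with exponent $1/p>0$, any $B$ commuting with $C=A^p$ must commute with $C^{1/p}=A$. Hence the two commutants coincide.

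The argument is essentially routine spectral theory, so there is no serious obstacle; the only points requiring care are the verification that the functional calculus is norm-contractive on $C(\sigma(A))$ (so that polynomial approximation transfers to operator-norm approximation) and the identity $(A^p)^{1/p}=A$, both of which are immediate consequences of the multiplicative, isometric nature of the continuous functional calculus for the self-adjoint operator $A$. I would emphasize that the converse direction is \emph{not} a separate computation but simply the same lemma applied to $A^p$ in place of $A$; this is precisely what makes the positivity hypothesis indispensable, since without it the $p$-th power and its inverse power would not both be available through a single continuous functional calculus.
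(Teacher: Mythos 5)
Your proof is correct. Note, however, that the paper does not prove this statement at all: it is quoted verbatim from Pietrzycki--Stochel \cite{JS} as a known tool, so there is no internal proof to compare against. Your argument --- reduce to the lemma that anything commuting with a positive $A$ commutes with $A^q$ for all $q>0$ (via Weierstrass approximation and the isometric continuous functional calculus), then get the reverse inclusion for free by applying the same lemma to $C:=A^p$ with exponent $1/p$ and the composition identity $(A^p)^{1/p}=A$ --- is the standard proof of this fact and is complete; the only steps needing care (norm-contractivity of the functional calculus, and the composition rule justifying $(A^p)^{1/p}=A$) are exactly the ones you flagged and handled.
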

\begin{theorem}
	Let $T$ be a sub-$n$--normal operator on a Hilbert space $\mathcal{H}$. \ If $T^{m}$ is  $n$--quasinormal for an integer $m>1$, then $T$ is  $n$--quasinormal.
\end{theorem}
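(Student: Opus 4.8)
The plan is to reduce the statement to the classical situation of a subnormal operator one of whose powers is quasinormal, and then to exploit a normal extension together with Propositions \ref{p1} and \ref{p2}. First I would set $A:=T^{n}$. Since $T$ is sub-$n$--normal, it has an $n$--normal extension $S$ on some $\mathcal{K}\supseteq\mathcal{H}$, and then $N:=S^{n}$ is a \emph{normal} operator on $\mathcal{K}$ leaving $\mathcal{H}$ invariant, with $N|_{\mathcal{H}}=T^{n}=A$; in particular $A$ is subnormal. Moreover, since $T^{m}$ is $n$--quasinormal, $(T^{m})^{n}=T^{mn}=A^{m}$ is quasinormal. Thus the statement is equivalent to showing that $A$ is quasinormal (which is exactly the assertion that $T$ is $n$--quasinormal). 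Writing $X:=N^{*}N\ge 0$ and letting $P$ be the orthogonal projection of $\mathcal{K}$ onto $\mathcal{H}$, the normality of $N$ gives $N^{*j}N^{j}=X^{j}$, and since $N^{j}h=A^{j}h$ for $h\in\mathcal{H}$ one obtains the moment identity $A^{*j}A^{j}=PX^{j}P|_{\mathcal{H}}$ for every $j\in\mathbb{Z}_{+}$.

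The heart of the argument is the comparison of the cases $j=m$ and $j=2m$. Because $A^{m}$ is quasinormal, a direct computation (or Lemma \ref{l2}, applied with $T$ replaced by $A$, the index $n$ replaced by $m$, and $k=2$) yields $A^{*2m}A^{2m}=(A^{*m}A^{m})^{2}$, which by the moment identity reads $PX^{2m}P=(PX^{m}P)^{2}$ on $\mathcal{H}$. Now I would recognize this equality as an extremal case. On the one hand there is the identity $PX^{2m}P-(PX^{m}P)^{2}=\big((I-P)X^{m}P\big)^{*}\big((I-P)X^{m}P\big)\ge 0$, so equality forces $(I-P)X^{m}P=0$, i.e.\ $P$ commutes with $X^{m}$. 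On the other hand, the same conclusion follows from the equality clause of Proposition \ref{p1} applied to the positive operator $X^{2m}$, the contraction $V=P$, and the operator monotone function $f(t)=\sqrt{t}$ (non-affine, with $f(0)=0$): since $\big(PX^{2m}P\big)^{1/2}=PX^{m}P$ is precisely $f(V^{*}X^{2m}V)=V^{*}f(X^{2m})V$, Proposition \ref{p1} gives that $P$ commutes with $X^{2m}$. Either way $P$ commutes with a positive power of $X$; invoking Proposition \ref{p2}, the commutant of that power equals the commutant of $X$, so $P$ commutes with $X$ itself, i.e.\ $\mathcal{H}$ reduces $X$.

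Once $\mathcal{H}$ reduces $X$, the argument closes quickly: for $h\in\mathcal{H}$ we have $X^{k}h\in\mathcal{H}$, whence $A^{*k}A^{k}h=PX^{k}h=X^{k}h=(PXP)^{k}h=(A^{*}A)^{k}h$ for every $k$, that is, $A^{*k}A^{k}=(A^{*}A)^{k}$ for all $k$. By Lemma \ref{l2} (taken with $n=1$) this means $A=T^{n}$ is quasinormal, i.e.\ $T$ is $n$--quasinormal, as desired. The main obstacle I anticipate is the correct set-up and justification of the moment identity $A^{*j}A^{j}=PX^{j}P$ (which rests on the normality of $N=S^{n}$ and the invariance of $\mathcal{H}$) together with the recognition of $PX^{2m}P=(PX^{m}P)^{2}$ as the equality case triggering Proposition \ref{p1}; the passage from a power of $X$ back to $X$ via Proposition \ref{p2} and the final bookkeeping are routine. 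One should also dispose of the trivial case $\mathcal{H}=\mathcal{K}$ (where $A=N$ is already normal, hence quasinormal) separately, since the equality clause of Proposition \ref{p1} requires $P\neq I$.
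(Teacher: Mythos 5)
Your proposal is correct and follows essentially the same route as the paper's proof: compress powers of the positive operator $S^{*n}S^{n}$ (your $X=N^{*}N$) to $\mathcal{H}$, recognize the quasinormality hypothesis as the equality case of Hansen's inequality (Proposition \ref{p1}) for a root function, deduce that $P$ commutes with a power of $X$, descend to $X$ itself via Proposition \ref{p2}, and conclude with Lemma \ref{l2}. Two of your refinements are worth keeping: the elementary identity $PX^{2m}P-(PX^{m}P)^{2}=\bigl((I-P)X^{m}P\bigr)^{*}\bigl((I-P)X^{m}P\bigr)$ yields the commutation $PX^{m}=X^{m}P$ without invoking Hansen's theorem at all, and your separate treatment of the trivial case $\mathcal{H}=\mathcal{K}$ addresses the requirement $P\neq I$ in the equality clause of Proposition \ref{p1}, a point the paper's proof passes over in silence.
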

\begin{proof}
	Let $S \in B(\mathcal{K})$ be a $n$--normal extension of $T$. \ We write $$S =	\begin{pmatrix}
		T & U  \\
		0 & V \\
	\end{pmatrix}$$ on $\mathcal{K}=\mathcal{H} \oplus \mathcal{H}^{\perp}$.
	Now $$P(S^{*n}S^{n})^{i}P=P(S^{*ni}S^{ni})P=	\begin{pmatrix}
		(T^{*ni}T^{ni}) & 0 \\
		0 & 0\\
	\end{pmatrix}$$
	
	Since  $T^{m}$ is  $n$ -quasinormal, $T^{*m ni}T^{mi}=(T^{*mn}T^{mn})^{i}$ by Lemma \ref{l2}.
	
	$$P(S^{*n}S^{n})^{k}P=\begin{pmatrix}
		(T^{*nk}T^{nk}) & 0 \\
		0 & 0\\
	\end{pmatrix}=\begin{pmatrix}
		(T^{*nki}T^{nki})^{\frac{1}{i}} & 0 \\
		0 & 0\\
	\end{pmatrix}=(P(S^{*n}S^{n})^{ki}P)^{\frac{1}{i}}$$
	Let $f : [0, \infty) \rightarrow \mathbb{R}$ be the function given by $f(x) = x^{\frac{1}{k}}$
	for $x \in [0, \infty)$. \ It follows by L\"owner-Heinz inequality
	that $f$ is an operator monotone function. \ Using the Stone-von
	Neumann functional calculus, we get
	$$Pf(S^{*n}S^{n})^{ki})P =f( P(S^{*n}S^{n})^{ki}P)$$
	Then by Proposition \ref{p1} P commutes with $(S^{*n}S^{n})^{ki}$. \ Then by proposition \ref{p2}, P commutes with $(S^{*n}S^{n}))$. \ Thus,
	$$\begin{pmatrix}
		(T^{*nk}T^{nk}) & 0 \\
		0 & 0\\
	\end{pmatrix}=P(S^{*n}S^{n})^{k}P=(PS^{*n}S^{n}P)^{k}=\begin{pmatrix}
		(T^{*n}T^{n})^{k} & 0 \\
		0 & 0\\
	\end{pmatrix}$$
	which implies $T$ is $n$--quasinormal.
\end{proof}

\bigskip
\noindent \textit{{\bf Acknowledgments}}. \ The first-named author was partially supported by U.S. NSF grant DMS-2247167. \ The second-named author was supported in part by the Mathematical Research Impact Centric Support, MATRICS (MTR/2021/000373) by SERB,
Department of Science and Technology (DST), Government of India.

\bigskip
\noindent {\bf Declarations}

\medskip

\noindent \underline{Conflict of Interest declaration}: \ The submitted work is original. \ It has not been
published elsewhere in any form or language (partially or in full), and it is
not under simultaneous consideration or in press by another journal. \ The
authors have no competing interests to declare that are relevant to the content
of this article.

\medskip
\noindent \underline{Data availability}: \ The manuscript has no associated data.

\bigskip \bigskip

\bigskip

	\address{R.E. Curto\endgraf
		Department of Mathematics\endgraf The University of Iowa\endgraf Iowa City, Iowa, 52242, USA}
		
	\email{\textcolor[rgb]{0.00,0.00,0.84}{ raul-curto@uiowa.edu}}
	
	\medskip
	\address{T. Prasad\endgraf
		Department of Mathematics, \endgraf
		University of Calicut,\endgraf
		Kerala-673635, 
		India.}

\email{\textcolor[rgb]{0.00,0.00,0.84}{ prasadvalapil@gmail.com}}

\end{document}